\newtheorem{theorem}{Theorem}[section]
\newtheorem{corollary}{Corollary}[section]
\newtheorem{lemma}{Lemma}[section]
\newtheorem{remark}{Remark}[section]
\newtheorem{example}{Example}[section]
\newtheorem{conjecture}{Conjecture}
\newcommand{\ignore}[1]{}
\newcommand{\oR}{{\mathbb R}}
\newcommand{\oN}{{\mathbb N}}
\newcommand{\MH}{{\mathcal H}}
\title[Analyse poly. opt. over simplex using multi-hypergeometric distribution]{An error analysis for polynomial optimization over the simplex based on the multivariate hypergeometric distribution}
\author{Etienne de Klerk}
\address{Tilburg University;
PO Box 90153, 5000 LE Tilburg, The Netherlands.}
\email{E.deKlerk@uvt.nl}
\author{Monique Laurent}
\address{Centrum Wiskunde \& Informatica (CWI), Amsterdam and Tilburg University;
CWI, Postbus 94079, 1090 GB Amsterdam, The Netherlands.}
\email{monique@cwi.nl}
\author{Zhao Sun}
\address{Tilburg University;
PO Box 90153, 5000 LE Tilburg, The Netherlands.}
\email{Z.Sun@uvt.nl}
\date{\today}                                           
\keywords{Polynomial optimization over the simplex, Global optimization, Nonlinear optimization}
\subjclass[2010]{90C26, 90C30}
\begin{document}
\begin{abstract}
We study the minimization of fixed-degree polynomials over the simplex. This problem is well-known to be NP-hard,
as it contains the maximum stable set problem in graph theory as a special case.
In this paper, we consider a rational approximation by taking the minimum over the regular grid,
which consists of rational points with denominator $r$ (for given $r$).
We show that the associated convergence rate is  $O(1/r^2)$ for quadratic polynomials. For general polynomials,
if there exists a rational global minimizer over the simplex, we show that the convergence rate is also of the order $O(1/r^2)$.
Our results answer a question posed by De Klerk et al. \cite{KLS13} and improves on previously known $O(1/r)$ bounds in the quadratic case.
\end{abstract}

\maketitle

\section{Introduction and preliminaries}
 We consider optimization of polynomials over the standard simplex:
\[
\Delta_n:=\left\{x\in\oR_+^n:\sum_{i=1}^nx_i=1\right\}.
\]
More precisely, given a polynomial $f\in\mathcal{H}_{n,d}$,
where $\mathcal{H}_{n,d}$ {denotes} the set of $n$-variate homogeneous real polynomials of degree $d$,
 we define
\begin{equation}\label{funderline}
\underline{f}:=\min_{x\in\Delta_n}f(x),
\end{equation}
and $\overline{f}:=\max_{x\in\Delta_n}f(x)$.
For computational complexity reasons,  we assume throughout that the polynomial $f$ has integer coefficients.

For quadratic  $f\in \MH_{n,2}$,  Vavasis \cite{SAV90} shows that problem (\ref{funderline}) admits a rational
 global minimizer $x^*$, whose bit-size is polynomial in the bit-size of the input data.
On the other hand, when the degree of $f$ is larger than $2$,  there exist polynomials $f$ for which  problem (\ref{funderline}) does
 not have any rational global minimizer.  This is the case, for instance, for  the polynomial $f(x)=2{x_1}^3-x_1\left( \sum_{i=1}^nx_i \right)^2$, whose global minimizer always has the irrational component $x_1={1/ \sqrt{6}}.$

\subsection*{Complexity and approximation results}
The global optimization problem (\ref{funderline}) is known to be NP-hard, and contain the maximum stable set problem in graphs as a special case.
Indeed, for a graph $G=(V,E)$, Motzkin and Straus \cite{MS} show that its  stability number $\alpha(G)$ can be calculated via
$${1\over \alpha(G)}=\min_{x\in \Delta_{|V|}}x^T(I+A_G)x,$$
where $I$ denotes the identity matrix and $A_G$ denotes the adjacency matrix of graph $G$.

\medskip
On the other hand, there exists a polynomial time approximation scheme (PTAS) for problem (\ref{funderline}) over
the class of polynomials $f\in \MH_{n,d}$ with fixed degree $d$, as was shown by Bomze and de Klerk   \cite{BK02}
 for degree $d=2$  and by De Klerk, Laurent and Parrilo  \cite{KLP06} for degree $d\ge 3$. The PTAS is easily described:
  It takes the minimum of $f$ over the regular grid
\[
\Delta(n,r):=\{x\in\Delta_n:rx\in\oN^n\},
\]

\noindent
for increasing values of $r\in\oN$. Note that
\begin{equation}\label{fdelta}
f_{\Delta(n,r)}:=\min_{x\in\Delta(n,r)}f(x)
\end{equation}
may be computed by performing $|\Delta(n,r)| = {n+r-1 \choose r}$ evaluations of $f$.
Thus, for fixed $r$, $f_{\Delta(n,r)}$ can be obtained in polynomial (in $n$) time. The following error
 estimates have been shown for the range $f_{\Delta(n,r)}-\underline f$ in terms of
  the range $\overline f -\underline f$ of function values.

\begin{theorem}   \cite[Theorem 3.2]{BK02} 
\label{thmklsquad}
For any polynomial $f\in \mathcal{H}_{n,2}$ and $r\ge 1$, one has
 $$f_{\Delta(n,r)}-\underline{f}\le {\overline{f}-\underline{f}\over r}.$$
\end{theorem}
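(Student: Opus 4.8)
The plan is to use a probabilistic (sampling) argument. Write $f(x)=x^TQx$ for a symmetric matrix $Q\in\oR^{n\times n}$, and let $x^*\in\Delta_n$ be a global minimizer, so that $f(x^*)=\underline f$. The idea is to construct a random point of the grid $\Delta(n,r)$ whose \emph{expected} $f$-value already lies within $(\overline f-\underline f)/r$ of $\underline f$; since the grid minimum $f_{\Delta(n,r)}$ cannot exceed this expectation, the desired bound follows immediately.

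Concretely, I would draw $r$ independent random indices $i_1,\dots,i_r\in\{1,\dots,n\}$, each equal to $i$ with probability $x^*_i$, and set $X:=\frac1r\sum_{k=1}^r e_{i_k}$, where $e_i$ denotes the $i$-th standard unit vector. By construction $rX\in\oN^n$ with $\sum_i(rX)_i=r$, so $X\in\Delta(n,r)$, and moreover $\mathbb{E}[X]=x^*$. (In the language of this paper, $rX$ follows a multinomial distribution.) The point of sampling from $x^*$ is that the random grid point clusters around the minimizer.

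The key step is to compute $\mathbb{E}[f(X)]$. Expanding $f(X)=\frac1{r^2}\sum_{k,l=1}^r e_{i_k}^TQ\,e_{i_l}$ and splitting the $r^2$ ordered pairs into the $r(r-1)$ off-diagonal ones ($k\ne l$) and the $r$ diagonal ones ($k=l$), independence gives $\mathbb{E}[e_{i_k}^TQe_{i_l}]=(x^*)^TQx^*=\underline f$ when $k\ne l$, whereas $\mathbb{E}[e_{i_k}^TQe_{i_k}]=\sum_i Q_{ii}x^*_i=\sum_i f(e_i)\,x^*_i$ when $k=l$ (using that $e_i^TQe_i=f(e_i)$). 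Collecting terms yields
$$\mathbb{E}[f(X)]=\frac{r-1}{r}\,\underline f+\frac1r\sum_{i=1}^n f(e_i)\,x^*_i.$$

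Finally, since every vertex $e_i$ lies in $\Delta_n$ we have $f(e_i)\le\overline f$, and because $\sum_i x^*_i=1$ this gives $\sum_i f(e_i)\,x^*_i\le\overline f$. Substituting, $\mathbb{E}[f(X)]\le\frac{r-1}{r}\underline f+\frac1r\overline f=\underline f+\frac1r(\overline f-\underline f)$, and therefore $f_{\Delta(n,r)}\le\mathbb{E}[f(X)]\le\underline f+(\overline f-\underline f)/r$, as claimed. The only real content of the argument is the clean separation into diagonal and off-diagonal contributions, so I expect the main (and rather minor) obstacle to be bookkeeping the diagonal term correctly and recognizing that it is exactly controlled by the vertex values $f(e_i)\le\overline f$; everything else is routine.
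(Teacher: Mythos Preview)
Your argument is correct and is precisely the multinomial-distribution proof that the paper summarizes in its introduction (attributed there to \cite{KLS13}): your random grid point $X$ is the normalized multinomial sample, and the identity $\mathbb{E}[f(X)]=\tfrac{r-1}{r}\underline f+\tfrac1r\sum_i f(e_i)\,x^*_i$ is the quadratic Bernstein approximation at $x^*$. The paper itself does not give a direct proof of this theorem---it is quoted from \cite{BK02}---though its own hypergeometric machinery also recovers the bound, as remarked after Theorem~\ref{thmquad}.
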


\begin{theorem}\cite[Theorem 1.3]{KLP06}\label{thmklsgeneral}
For any polynomial $f\in \mathcal{H}_{n,d}$ and $r\ge 1$,  one has
\begin{equation*}
\label{final bounds}
f_{\Delta(n,r)}-\underline{f} \le 
  \left(1-{r^{\underline{d}}\over r^d}\right){2d-1\choose d}d^d(\overline{f}-\underline{f}).
\end{equation*}
\end{theorem}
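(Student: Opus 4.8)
The plan is to prove the bound by a probabilistic sampling argument, in which the factor $1-r^{\underline{d}}/r^d$ appears as a collision probability. Fix a global minimizer $x^{\ast}\in\Delta_n$, so $f(x^{\ast})=\underline{f}$, and generate a random grid point by drawing $k\sim\mathrm{Multinomial}(r,x^{\ast})$; then $k/r\in\Delta(n,r)$ with probability one, so $f_{\Delta(n,r)}\le \mathbb{E}[f(k/r)]$ and it suffices to bound $\mathbb{E}[f(k/r)]-\underline{f}$. To expose the structure I would realize the multinomial counts as $k_j=\#\{s:i_s=j\}$ for i.i.d.\ draws $i_1,\dots,i_r\sim x^{\ast}$, and write $f(x)=F(x,\dots,x)$ where $F$ is the symmetric $d$-linear form associated with $f$. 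If $t_1,\dots,t_d$ are i.i.d.\ uniform on $\{1,\dots,r\}$ (sampling $d$ positions \emph{with} replacement), then conditionally on the $i_s$ the colours $i_{t_1},\dots,i_{t_d}$ are i.i.d.\ from $k/r$, whence $\mathbb{E}\,F(e_{i_{t_1}},\dots,e_{i_{t_d}})=\mathbb{E}[f(k/r)]$; if instead $u_1,\dots,u_d$ are $d$ pairwise \emph{distinct} uniformly random positions, the colours are i.i.d.\ from $x^{\ast}$, so $\mathbb{E}\,F(e_{i_{u_1}},\dots,e_{i_{u_d}})=f(x^{\ast})=\underline{f}$.

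The two schemes agree precisely on the event $A$ that $t_1,\dots,t_d$ are pairwise distinct, and $\mathbb{P}(A)=r^{\underline{d}}/r^d$; conditioned on $A$ the with-replacement tuple is uniform over distinct tuples, i.e.\ distributed as $(u_1,\dots,u_d)$. Writing $\Phi=F(e_{i_{t_1}},\dots,e_{i_{t_d}})$ and conditioning on $A$ versus its complement then gives
\[
\mathbb{E}[f(k/r)]-\underline{f}=\Big(1-\tfrac{r^{\underline{d}}}{r^d}\Big)\big(\mathbb{E}[\Phi\mid A^{c}]-\underline{f}\big).
\]
Next I would bound the bracket. Since $F(e_{c_1},\dots,e_{c_d})$ depends only on the type $\beta$ of the colour multiset and equals the scaled Bernstein coefficient $c_\beta:=f_\beta/\binom{d}{\beta}$, the random value $\Phi$ always lies in $[\min_\beta c_\beta,\max_\beta c_\beta]$; as $\underline{f}$ is itself an average of such values, one gets $\mathbb{E}[\Phi\mid A^{c}]-\underline{f}\le \max_\beta c_\beta-\min_\beta c_\beta$.

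It remains to bound the spread of the scaled coefficients by $\binom{2d-1}{d}d^d(\overline{f}-\underline{f})$, and this is the main obstacle, since the entire combinatorial constant is produced in this single step. I would establish it by polarization: applying the identity $F(v_1,\dots,v_d)=\frac{1}{d!}\sum_{\emptyset\ne S\subseteq[d]}(-1)^{d-|S|}f\big(\sum_{j\in S}v_j\big)$ at $v_j=e_{c_j}$ and using homogeneity, namely $f\big(\sum_{j\in S}e_{c_j}\big)=|S|^{d}\,f\big(|S|^{-1}\sum_{j\in S}e_{c_j}\big)$ with $|S|^{-1}\sum_{j\in S}e_{c_j}\in\Delta_n$, writes each $c_\beta$ as a signed combination of values of $f$ on the simplex whose weights sum to $1$ (by the surjection identity $\sum_{s=0}^{d}(-1)^{d-s}\binom{d}{s}s^{d}=d!$). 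Bounding the total absolute weight $\frac{1}{d!}\sum_{s=1}^{d}\binom{d}{s}s^{d}$ then controls $\max_\beta c_\beta-\min_\beta c_\beta$ by a multiple of $\overline{f}-\underline{f}$, and a crude estimate of this sum yields the stated factor $\binom{2d-1}{d}d^d$ (in fact a smaller constant suffices). Combining the three displays gives $f_{\Delta(n,r)}-\underline{f}\le(1-r^{\underline{d}}/r^d)\binom{2d-1}{d}d^d(\overline{f}-\underline{f})$; the residual case $r<d$ is trivial, since then $r^{\underline{d}}=0$ while $f_{\Delta(n,r)}-\underline{f}\le\overline{f}-\underline{f}$ always holds.
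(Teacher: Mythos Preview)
Your argument is correct. Note, however, that the paper does not give its own proof of this statement: it is quoted as \cite[Theorem~1.3]{KLP06}, and the introduction only sketches the alternative proof from \cite{KLS13}, based on the multinomial distribution and the Bernstein approximation (with the moment computations carried out via Stirling numbers, as in Lemmas~\ref{lemsn1}--\ref{lemsn2}), together with the Bernstein-coefficient range bound of Theorem~\ref{thmgene}.

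Your proof follows the same high-level strategy---sample $X\in\Delta(n,r)$ from the multinomial law with parameter $x^\ast$ and bound $\mathbb{E}[f(X)]-\underline{f}$---but the execution is genuinely different. By writing $f(x)=F(x,\dots,x)$ for the symmetric $d$-linear form and coupling ``$d$ positions with replacement'' to ``$d$ distinct positions'', you extract the factor $1-r^{\underline{d}}/r^d$ directly as a collision probability, bypassing the explicit moment formulas and Stirling-number identities used in \cite{KLS13}. Similarly, for the step $\max_\beta f_\beta\beta!/d!-\min_\beta f_\beta\beta!/d!\le\binom{2d-1}{d}d^d(\overline{f}-\underline{f})$ you supply a self-contained polarization argument rather than invoking Theorem~\ref{thmgene}; your route in fact yields the sharper constant $\tfrac{1}{d!}\sum_{s=1}^d\binom{d}{s}s^d$, which is easily seen to be at most $\binom{2d-1}{d}d^d$ (check $d=1$ directly; for $d\ge 2$ use $s^d\le d^d$ and $d(d+1)\cdots(2d-1)\ge 2^d$). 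The trade-off is that the coupling viewpoint is cleaner and more conceptual, while the moment/Stirling-number route of \cite{KLS13} is what generalizes verbatim to the hypergeometric setting used in the present paper.
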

For more results about the computational complexity of problem (\ref{funderline}), see \cite{EDK08,KHE08}; for
properties of the grid $\Delta(n,r)$, see \cite{Bos83}, and for recent studies of the approximation $f_{\Delta(n,r)}$, see \cite{BGY13,KLS13,SY13,ZS13,YEA12}.


De Klerk et al. \cite{KLS13} recently provided alternative proofs of the PTAS results in Theorems \ref{thmklsquad} and \ref{thmklsgeneral}.
The idea of these proofs is to define a suitable discrete probability distribution on $\Delta(n,r)$ (seen as a sample space), by using
 the multinomial distribution. (This idea is an extension of a probabilistic argument by Nesterov \cite{Nes2003}; for  the exact connection, see \cite[Section 6]{KLS13}.)

Recall that the multinomial distribution
may be explained by considering a box filled with balls of $n$ different colors, and where the fraction of balls of color $i \in \{1,\ldots,n\}$
 is denoted by $x_i$, say. If one draws $r$ balls randomly {\em with replacement} and let the random variable
  $Y_i$ denote the number of times that a ball of color $i$ was drawn, then
\[
\mathbf{Pr}\left[Y_1 = \alpha_1,\ldots,Y_n = \alpha_n\right] = {r!\over \alpha!} x^\alpha, \quad \quad \alpha \in r \Delta(n,r),
\]
where $\alpha! :=  \prod_{i=1}^n \alpha_i!$ and $x^\alpha := \prod_{i=1}^n x_i^{\alpha_i}$.
Defining the normalized random variable $X = \frac{1}{r}Y \in \Delta(n,r)$, one has
\[
\mathbb{E}[f(X)]=\sum_{\alpha\in r\Delta(n,r)}f\left({\alpha\over r}\right){r!\over \alpha!} x^\alpha,
\]
and the right-hand-side expression is precisely the \emph{Bernstein approximation of  $f$ of order $r$  at $x$}.
Therefore, since $f_{\Delta(n,r)} \le \mathbb{E}[f(X)]$, the new PTAS proof in \cite{KLS13} is essentially a consequence
 of the properties of Bernstein approximation on the standard simplex.

{This approach can be put in the more general context of the framework introduced by Lasserre \cite{Las01,Las11} based on reformulating any polynomial optimization problem as an optimization problem over measures.
When applied to our setting, this implies the following upper bound:
$$f_{\Delta(n,r)}\le \mathbb{E}_\mu(f)=\int_{\Delta(n,r)}f(x)\mu(dx)$$
for any probability measure $\mu $ on $\Delta(n,r)$.
So the work \cite{KLS13} is based on selecting the multinomial distribution with appropriate parameters as measure $\mu$.
In this paper we will select another measure, as explained below.}

\subsection*{Contribution of this paper}
In this paper, we give a partial to a question posed in  \cite{KLS13},
{concerning} the error bound in
 Theorems \ref{thmklsquad} and \ref{thmklsgeneral}, that may be rewritten as:
\begin{equation}\label{alpha}
\rho_r(f):={f_{\Delta(n,r)} - \underline f\over \overline f - \underline f} = O\left(\frac{1}{r} \right).
\end{equation}
In \cite{KLS13} several examples are given where  this error is in fact of the order $O(1/r^2)$ and
the question is posed whether this could be true in general.

Here, we give an affirmative answer for quadratic polynomials. More precisely,  we show that  $\rho_r(f)\le m/r^2$ if $f$
has a global minimizer with denominator $m$ (see Theorem \ref{thmquad}). In view of Vavasis' result \cite{SAV90} on the existence of rational minimizers
for quadratic programming, this implies that $\rho_r(f)=O(1/r^2)$ for quadratic $f$.
For polynomials $f$ of degree $d\ge 3$, when $f$ admits a rational global minimizer, we show that $\rho_r(f)=O(1/r^2)$
(see Corollaries \ref{corcub} and \ref{corgeneral}).

The main idea of our proof is to replace the multinomial distribution above by the \emph{hypergeometric distribution}, and we
therefore review some necessary background on the hypergeometric distribution next.

\subsection*{Multivariate hypergeometric distribution}
Consider a box containing $m$ balls, of which $m_i$ are of color $i$ for $i=1,\dots,n$. Thus $\sum_{i=1}^n m_i=m$.
We draw $r$ balls randomly from the box without replacement. This defines the random variable $Y_i$ as the number of balls of color $i$ in a random sample of $r$ balls.
Then, $Y=(Y_1,\ldots,Y_n)$ has the {\em  multivariate hypergeometric distribution}, with parameters $m,$ $r$ and $n$.
Given $\alpha\in \oN^n$ with $\sum_{i=1}^n\alpha_i=r$, the probability of obtaining the outcome $\alpha$, with  $\alpha_i$ balls of color $i$, is equal to
\begin{equation}\label{Yhyper}
\mathbf{Pr}\left[Y_1 = \alpha_1,\ldots,Y_n = \alpha_n\right] = {\prod_{i=1}^n {m_i\choose \alpha_i}\over {m\choose r}}.
\end{equation}

\noindent
For $\beta\in\oN^n$,  the $\beta$-th moment of the multivariate hypergeometric distribution $Y$ is  defined as
\begin{equation*}
m^{\beta}_{(n,r)}(Y):=\mathbb{E}\left( \prod_{i=1}^n{Y_i}^{\beta_i} \right)=\sum_{\alpha\in I(n,r)}\alpha^{\beta}{\prod_{i=1}^n {m_i\choose \alpha_i}\over {m\choose r}},
\end{equation*}
where $I(n,r):=\{\alpha\in \oN^n: |\alpha|:=\sum_{i=1}^n\alpha_i=r\}$. Combining \cite[relation (34.18)]{JKB97} and \cite[relation (39.6)]{JKB97}, we can obtain the explicit formula for $m^{\beta}_{(n,r)}(Y)$ in terms of the Stirling numbers of the second kind. For integers $a,b\in\oN$, the {\em Stirling number of the second kind} $S(a,b)$ counts the number of ways of partitioning a set of $a$ objects into $b$ nonempty subsets. Moreover, denote $r^{\underline{d}}:=r(r-1)\cdots(r-d+1)$.

\begin{theorem}\label{momentthm}
For $\beta\in\oN^n$, one has
\begin{equation*}
m^{\beta}_{(n,r)}(Y)=\sum_{\alpha\in\oN^n:\alpha\le\beta}{r^{\underline{|\alpha|}}\over m^{\underline{|\alpha|}}}\prod_{i=1}^n {m_i}^{\underline{\alpha_i}}S(\beta_i,\alpha_i).
\end{equation*}
\end{theorem}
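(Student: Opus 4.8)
The plan is to pass from ordinary powers to falling factorials, since the mixed \emph{falling-factorial} moments of the hypergeometric distribution have a clean product form, and then to convert back using the Stirling numbers of the second kind. Recall the classical identity $t^{b}=\sum_{a=0}^{b}S(b,a)\,t^{\underline{a}}$, valid for every nonnegative integer $t$, where $t^{\underline a}=t(t-1)\cdots(t-a+1)$. Applying this identity to each coordinate $Y_i^{\beta_i}$ and multiplying out, I would first write
\[
\prod_{i=1}^n Y_i^{\beta_i}=\sum_{\alpha\in\oN^n:\,\alpha\le\beta}\Big(\prod_{i=1}^n S(\beta_i,\alpha_i)\Big)\prod_{i=1}^n Y_i^{\underline{\alpha_i}}.
\]
Taking expectations and using linearity then reduces the theorem to computing the mixed factorial moment $\mathbb{E}\big(\prod_{i=1}^n Y_i^{\underline{\alpha_i}}\big)$ for each fixed $\alpha\le\beta$.

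This factorial moment is the crux of the argument. I would compute it directly from the hypergeometric probability mass function \eqref{Yhyper}, relying on the absorption identity $\gamma_i^{\underline{\alpha_i}}\binom{m_i}{\gamma_i}=m_i^{\underline{\alpha_i}}\binom{m_i-\alpha_i}{\gamma_i-\alpha_i}$, which holds because both sides equal $m_i!/\big((\gamma_i-\alpha_i)!\,(m_i-\gamma_i)!\big)$. Note that the terms with $\gamma_i<\alpha_i$ vanish on both sides, so summing over $\gamma\in I(n,r)$ is harmless. Pulling the factors $m_i^{\underline{\alpha_i}}$ out of the sum leaves
\[
\mathbb{E}\Big(\prod_{i=1}^n Y_i^{\underline{\alpha_i}}\Big)=\frac{\prod_{i=1}^n m_i^{\underline{\alpha_i}}}{\binom{m}{r}}\sum_{\gamma\in I(n,r)}\prod_{i=1}^n\binom{m_i-\alpha_i}{\gamma_i-\alpha_i}.
\]

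To evaluate the remaining sum I would substitute $\delta_i=\gamma_i-\alpha_i$, turning it into a sum over $\delta\in I(n,r-|\alpha|)$, and apply the multivariate Vandermonde--Chu identity $\sum_{\delta\in I(n,s)}\prod_i\binom{c_i}{\delta_i}=\binom{\sum_i c_i}{s}$ with $c_i=m_i-\alpha_i$ and $s=r-|\alpha|$; this gives $\binom{m-|\alpha|}{r-|\alpha|}$. A short binomial simplification then shows $\binom{m-|\alpha|}{r-|\alpha|}/\binom{m}{r}=r^{\underline{|\alpha|}}/m^{\underline{|\alpha|}}$, so that $\mathbb{E}\big(\prod_i Y_i^{\underline{\alpha_i}}\big)=\big(r^{\underline{|\alpha|}}/m^{\underline{|\alpha|}}\big)\prod_i m_i^{\underline{\alpha_i}}$. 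Substituting this back into the Stirling expansion above and collecting the factors $m_i^{\underline{\alpha_i}}S(\beta_i,\alpha_i)$ into the product over $i$ yields exactly the claimed formula. The only genuine obstacle is the factorial-moment computation; once the absorption and Vandermonde identities are in place the rest is bookkeeping, and this is presumably precisely what the cited relations (34.18) and (39.6) of \cite{JKB97} package together.
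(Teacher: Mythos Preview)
Your argument is correct. The paper does not actually prove this theorem; it simply states that the formula follows by combining relations (34.18) and (39.6) of \cite{JKB97}, so there is no ``paper's own proof'' to compare against beyond that citation. Your proof is precisely the standard derivation those relations encode: relation (39.6) is the factorial-moment formula
\[
\mathbb{E}\Big(\prod_{i=1}^n Y_i^{\underline{\alpha_i}}\Big)=\frac{r^{\underline{|\alpha|}}}{m^{\underline{|\alpha|}}}\prod_{i=1}^n m_i^{\underline{\alpha_i}},
\]
which you obtain via the absorption identity and Vandermonde--Chu, and relation (34.18) is the conversion from ordinary to factorial moments via $t^{b}=\sum_{a=0}^{b}S(b,a)\,t^{\underline a}$, exactly as you use it. The only cosmetic point is that this Stirling identity is in fact a polynomial identity valid for all real $t$, not merely nonnegative integers, but since each $Y_i$ is integer-valued this restriction is harmless.
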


Define the random variables
\begin{equation}\label{capitalX}
X=(X_1,\ldots, X_n) \ \ \text{where } X_i := Y_i/r\ \ (i=1,\ldots,n).
\end{equation}
Thus $X$ takes its values in $\Delta(n,r)$.
 Theorem \ref{momentthm} gives  the explicit formula for the moments of $X$. 
\begin{corollary}\label{momentcor}
For $\beta\in\oN^n$, one has
\begin{equation*}
m^{\beta}_{(n,r)}(X):=\mathbb{E}\left( \prod_{i=1}^n{X_i}^{\beta_i} \right)={1\over r^{|\beta|}}\sum_{\alpha\in\oN^n:\alpha\le\beta}{r^{\underline{|\alpha|}}\over m^{\underline{|\alpha|}}}\prod_{i=1}^n {m_i}^{\underline{\alpha_i}}S(\beta_i,\alpha_i).
\end{equation*}
\end{corollary}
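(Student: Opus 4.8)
The plan is to deduce this immediately from Theorem~\ref{momentthm}, using only the definition $X_i = Y_i/r$ from \eqref{capitalX} together with linearity of expectation. Since each $X_i$ is just a deterministic rescaling of $Y_i$ by the constant factor $1/r$, the product $\prod_{i=1}^n X_i^{\beta_i}$ is a scalar multiple of $\prod_{i=1}^n Y_i^{\beta_i}$, and the scalar can be pulled outside the expectation.

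Concretely, I would first observe that
\[
\prod_{i=1}^n X_i^{\beta_i} = \prod_{i=1}^n \left(\frac{Y_i}{r}\right)^{\beta_i} = \frac{1}{r^{\sum_{i=1}^n \beta_i}} \prod_{i=1}^n Y_i^{\beta_i} = \frac{1}{r^{|\beta|}} \prod_{i=1}^n Y_i^{\beta_i},
\]
where the factor collapses to $r^{-|\beta|}$ precisely because $|\beta| = \sum_{i=1}^n \beta_i$. Taking expectations of both sides and using that $1/r^{|\beta|}$ is a constant, linearity of $\mathbb{E}$ gives
\[
m^{\beta}_{(n,r)}(X) = \mathbb{E}\!\left(\prod_{i=1}^n X_i^{\beta_i}\right) = \frac{1}{r^{|\beta|}}\,\mathbb{E}\!\left(\prod_{i=1}^n Y_i^{\beta_i}\right) = \frac{1}{r^{|\beta|}}\, m^{\beta}_{(n,r)}(Y).
\]
Substituting the closed form for $m^{\beta}_{(n,r)}(Y)$ from Theorem~\ref{momentthm} then yields exactly the claimed expression.

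There is essentially no obstacle here: the result is a direct corollary, and the only thing to verify carefully is the bookkeeping that the product of the $r^{-\beta_i}$ factors assembles into the single prefactor $r^{-|\beta|}$, which is immediate from $|\beta|=\sum_i\beta_i$. No additional properties of the hypergeometric distribution are needed beyond what is already encapsulated in Theorem~\ref{momentthm}.
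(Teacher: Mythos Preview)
Your proposal is correct and matches the paper's approach exactly: the paper states the corollary as an immediate consequence of Theorem~\ref{momentthm} via the substitution $X_i = Y_i/r$, without writing out any further details. Your argument simply spells out the trivial rescaling step, which is all that is needed.
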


The multivariate hypergeometric distribution can be used for upper bounding the minimum of $f$ over $\Delta(n,r)$.

\begin{lemma}\label{lemmhdpoly}
Let $f=\sum_{\beta\in I(n,d)}f_{\beta}x^{\beta}\in\mathcal{H}_{n,d}$ and let $X:=(X_1,X_2,\dots,X_n)$ be as in (\ref{Yhyper}) and  (\ref{capitalX}). Then, one has
$$f_{\Delta(n,r)}\le\mathbb{E}\left( f(X) \right),$$
and the above inequality can be strict.
\end{lemma}
\begin{proof}
By definition (\ref{capitalX}), the random variable  $X$ takes its values in $\Delta(n,r)$, which implies directly that the expected value of $f(X)$ is at least the minimum of $f$ over $\Delta(n,r)$.
In order to show the inequality can be strict, we consider the following example: $f=2x_1^2+x_2^2-5x_1x_2$. One has $\underline{f}= -{17\over 32}$ attained at the unique minimizer $({7\over 16}, {9\over 16})$. Then we let $m=16$, $m_1=7$ and $m_2=9$. When $r=2$, one can easily check that $f_{\Delta(2,2)}= -{1\over 2}$ (attained at the unique minimizer $({1\over 2}, {1\over 2})$). On the other hand, when $r=2$, $\mathbb{E}\left( f(X) \right)={31\over 80}$, and thus $\mathbb{E}\left( f(X) \right)>f_{\Delta(2,2)}$.
\qed
\end{proof}

\ignore{
\medskip
In order to upper bound the range $f_{\Delta(n,r)}-f_{\Delta(n,m)}$, by Lemma \ref{lemmhdpoly}, we can upper bound the difference $\mathbb{E}\left( f(X) \right)-f_{\Delta(n,m)}$ beforehand. This will be our strategy (see the proofs of Theorems \ref{thmquadratic}, \ref{thmcubsqf} and \ref{thmgeneral}).
}

\subsection*{Bernstein coefficients}
\noindent Any  polynomial $f=\sum_{\beta\in I(n,d)}f_{\beta}x^{\beta}\in\mathcal{H}_{n,d}$ can be  written as
\begin{equation}\label{berncoef}
f=\sum_{\beta\in I(n,d)}f_{\beta}x^{\beta}=\sum_{\beta\in I(n,d)}\left(f_{\beta}{\beta!\over d!}\right){d!\over \beta!}x^{\beta}.
\end{equation}

\noindent Then, the scalars $f_{\beta}{\beta!\over d!}$ (for $\beta\in I(n,d)$) are called the {\em Bernstein coefficients} of $f$  since  they are the coefficients of $f$ when expressing $f$ in the Bernstein basis $\{{d!\over \beta!}x^\beta: \beta\in I(n,d)\}$ of $\mathcal{H}_{n,d}$ (see e.g. \cite{KL10,KLS13,ZS13}). Combining (\ref{berncoef}) with the multinomial theorem

\begin{equation}\label{multithm}
\left(\sum_{i=1}^n x_i\right)^d=\sum_{\alpha\in I(n,d)}{d!\over \alpha!}x^{\alpha},
\end{equation}
it follows that, when $x\in \Delta_n$, $f(x)$ is a convex combination of its Bernstein coefficients $f_{\beta}{\beta!\over d!}$. Hence, for any $x\in\Delta_n$, we have
\begin{equation}\label{reprop1}
\min_{\beta\in I(n,d)}f_{\beta}{\beta!\over d!}\le f(x)\le\max_{\beta\in I(n,d)}f_{\beta}{\beta!\over d!}.
\end{equation}

\noindent In Section \ref{secgeneral}, we will make use of the following theorem by de Klerk et al. \cite{KLP06}, which bounds the range of the Bernstein coefficients in terms of the range of function values $\overline{f}-\underline{f}$.

\begin{theorem}\label{thmgene}\cite[Theorem 2.2]{KLP06}
For any polynomial $f=\sum_{\beta\in I(n,d)}f_{\beta}x^{\beta}\in\mathcal{H}_{n,d}$, one has
\begin{equation*}
\max_{\beta\in I(n,d)}f_{\beta}{\beta!\over d!}-\min_{\beta\in I(n,d)}f_{\beta}{\beta!\over d!}\le {2d-1\choose d}d^d (\overline{f}-\underline{f}).
\end{equation*}
\end{theorem}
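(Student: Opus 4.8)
The plan is to recover each Bernstein coefficient $f_\beta\frac{\beta!}{d!}$ \emph{exactly} as an affine combination of values of $f$ at points of $\Delta_n$, and then read off the range bound. Concretely, I aim to prove the identity
\begin{equation}\label{keyrep}
f_\beta\frac{\beta!}{d!}=\frac1{d!}\sum_{0<\delta\le\beta}(-1)^{d-|\delta|}\binom{\beta}{\delta}|\delta|^d\, f\!\left(\frac{\delta}{|\delta|}\right)\qquad(\beta\in I(n,d)),
\end{equation}
where $\binom\beta\delta=\prod_i\binom{\beta_i}{\delta_i}$ and the sum runs over $0\le\delta\le\beta$ with $\delta\neq0$. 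Each node $\delta/|\delta|$ lies in $\Delta_n$, so $f(\delta/|\delta|)\in[\underline f,\overline f]$; once \eqref{keyrep} is in hand, the theorem follows by bounding the coefficients.

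To establish \eqref{keyrep}, I would start from the forward-difference operator $\Delta^\beta f(x):=\sum_{0\le\delta\le\beta}(-1)^{d-|\delta|}\binom\beta\delta f(x+\delta)$, with unit steps along each axis. For $f\in\mathcal{H}_{n,d}$ with $|\beta|=d$ this $d$-th order difference is exact: reducing to monomials $x^\gamma$ one checks that $\Delta^\beta x^\gamma$ vanishes unless $\gamma=\beta$, giving $\Delta^\beta f=\beta!\,f_\beta$. Since the nodes $x+\delta$ have varying coordinate sums they do not lie on $\Delta_n$, so I would invoke homogeneity: taking the base point $x=t\mathbf 1$ gives $f(t\mathbf 1+\delta)=(nt+|\delta|)^d f(y_\delta)$ with $y_\delta:=(t\mathbf 1+\delta)/(nt+|\delta|)\in\Delta_n$. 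Letting $t\to0^+$, the $\delta=0$ term (whose coefficient is $O(t^d)$) drops out while $y_\delta\to\delta/|\delta|$ for $\delta\neq0$, which yields \eqref{keyrep}.

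It then remains to extract the range bound. Writing $C_\delta:=\frac1{d!}(-1)^{d-|\delta|}\binom\beta\delta|\delta|^d$, I would group terms by $k=|\delta|$ and apply Vandermonde's identity $\sum_{|\delta|=k,\,\delta\le\beta}\binom\beta\delta=\binom dk$. The signed total then collapses to $\frac1{d!}\sum_{k=0}^d(-1)^{d-k}\binom dk k^d=1$, using the surjection/Stirling identity $\sum_{k=0}^d(-1)^{d-k}\binom dk k^d=d!\,S(d,d)=d!$, so the $C_\delta$ are the weights of an affine combination. Hence $f_\beta\frac{\beta!}{d!}-\underline f=\sum_\delta C_\delta\big(f(\delta/|\delta|)-\underline f\big)$ lies in $[-N(\overline f-\underline f),\,P(\overline f-\underline f)]$, where $P$ and $N$ collect the positive and negative parts of the $C_\delta$. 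Since $P-N=1$ and $P+N=\frac1{d!}\sum_{k=0}^d\binom dk k^d=:S$ for \emph{every} $\beta$, the upper and lower envelopes are $\beta$-independent, and subtracting them gives $\max_\beta f_\beta\frac{\beta!}{d!}-\min_\beta f_\beta\frac{\beta!}{d!}\le S(\overline f-\underline f)$. Finally $S\le\binom{2d-1}{d}d^d$ (for $d\ge2$ via $\sum_{k=0}^d\binom dk k^d\le 2^d d^d$, and $S=1$ for $d=1$), giving the stated inequality.

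The main obstacle is the mismatch between the finite-difference stencil, whose nodes $x+\delta$ have different coordinate sums, and the simplex: the argument hinges entirely on using homogeneity to rescale these nodes onto $\Delta_n$ and on justifying the $t\to0^+$ limit so that the stray off-simplex node disappears. The remaining ingredients — exactness of the $d$-th difference on $\mathcal{H}_{n,d}$ and the two combinatorial evaluations of $\sum C_\delta$ — are routine once \eqref{keyrep} is set up. I would also note that this route in fact delivers the sharper constant $S=\frac1{d!}\sum_{k=0}^d\binom dk k^d$, which is far smaller than $\binom{2d-1}{d}d^d$ for $d\ge2$, so Theorem \ref{thmgene} follows comfortably.
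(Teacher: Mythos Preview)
The paper does not prove this theorem; it is quoted from \cite{KLP06} and used as a black box in Section~\ref{secgeneral}. There is therefore no in-paper proof to compare against, so let me simply assess your argument.

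Your proof is correct. The identity \eqref{keyrep} holds: since $\Delta^\beta$ applied to any $f\in\mathcal H_{n,d}$ with $|\beta|=d$ is the constant $\beta!\,f_\beta$ (in each variable $\Delta_i^{\beta_i}x_i^{\gamma_i}$ vanishes when $\gamma_i<\beta_i$, and $|\gamma|=|\beta|$ forces $\gamma=\beta$), you may evaluate at $x=0$ directly. Homogeneity gives $f(\delta)=|\delta|^d f(\delta/|\delta|)$ for $\delta\neq0$ and $f(0)=0$, which is exactly \eqref{keyrep}; the $t\to0^+$ device is correct but unnecessary. The range bound then follows as you say: the sign of $C_\delta$ depends only on $|\delta|$, and Vandermonde's identity $\sum_{|\delta|=k,\,\delta\le\beta}\binom{\beta}{\delta}=\binom{d}{k}$ makes $P$ and $N$ independent of $\beta$, so every Bernstein coefficient lies in the fixed interval $[P\underline f-N\overline f,\,P\overline f-N\underline f]$ of length $S(\overline f-\underline f)$.

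Two comments. First, the limit step you flagged as the ``main obstacle'' is a non-issue once you note that $\Delta^\beta f$ is constant; just take $x=0$. Second, your constant $S=\tfrac1{d!}\sum_{k=1}^d\binom{d}{k}k^d$ is strictly smaller than $\binom{2d-1}{d}d^d$ for $d\ge2$ (e.g.\ $3$ versus $12$ at $d=2$, $9$ versus $270$ at $d=3$), so you have in fact proved a sharper inequality than the one stated. The closing estimate $2^d/d!\le\binom{2d-1}{d}$ for $d\ge2$ is fine, since $\binom{2d-1}{d}d!=(2d-1)(2d-2)\cdots d$ is a product of $d$ factors each at least $2$.
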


\subsection*{Notation}
\noindent We denote $[n]:=\{1,2,\ldots,n\}$ and let $\oN^n$ be the set of all $n$-dimensional nonnegative integral vectors.
 For $\alpha\in\oN^n$, we define $|\alpha|:=\sum_{i=1}^n\alpha_i$ and $\alpha!:=\alpha_1!\alpha_2!\cdots\alpha_n!$.
 For vectors $\alpha,\beta\in\oN^n$, the inequality $\alpha\le\beta$ means $\alpha_i\le\beta_i$ for any $i\in[n]$.
As before, set $I(n,d):=\{\alpha\in \oN^n: |\alpha|=d\}$ and let $\mathcal{H}_{n,d}$ be the set of all multivariate
 real homogeneous polynomials in $n$ variables with degree $d$. Then, for $\alpha\in \oN^n$,
 we denote $x^{\alpha}:=\prod_{i=1}^nx_i^{\alpha_i}$. Similarly, for $I\subseteq[n]$,
 we let $x^{I}:=\prod_{i\in I}x_i$. A monomial $x^{\alpha}$ is called {\em square-free} (aka {\em multilinear}) if $\alpha_i\in\{0,1\}$ ($i\in[n]$), and a polynomial $f$ is called {\em square-free} if all its monomials are square-free.
Moreover, denote $x^{\underline{d}}:=x(x-1)(x-2)\cdots (x-d+1)$ for
integer $d\ge0$ and $x^{\underline{\alpha}}:=\prod_{i=1}^nx_{i}^{\underline{{\alpha}_i}}$ for $\alpha\in \oN^n$.
 We let $e$ denote  the all-ones vector and $e_i$ denote the $i$-th standard unit vector.  Furthermore, for a random variable $W$,
  $\mathbb{E}(W)$ is its expectation.

\subsection*{Structure}
The rest of the paper is organized as follows.
In Section \ref{secquad}, we consider the standard quadratic optimization
problem, while in Section \ref{seccubsqf} we treat the cubic and square-free (or \emph{multilinear}) cases.
In Section \ref{secgeneral}, we focus on the general fixed-degree polynomial optimization over the simplex.
Finally, we give all the proofs of results stated in Section \ref{seccubsqf} in the Appendix.

\section{Standard quadratic optimization}\label{secquad}

We  consider the problem ({\ref{funderline}}) where the polynomial $f$ is assumed to be  quadratic.
The following result plays a key role for our refined error analysis in Theorem \ref{thmquad} below.

\begin{theorem}\label{thmquadratic}
Let $f=x^T Qx \in \MH_{n,2}$. For any    integers $r$ and $m$   such that $1\le r\le m$,  one has
\begin{equation*}
f_{\Delta(n,r)} - f_{\Delta(n,m)} \le  \frac{m-r}{r(m-1)}\left(\overline f - \underline f\right).
\end{equation*}
\end{theorem}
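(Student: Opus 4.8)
The plan is to instantiate the hypergeometric machinery of Lemma~\ref{lemmhdpoly} with the ball-counts $m_i$ dictated by an optimal grid point of $\Delta(n,m)$, and then to compute the resulting expectation exactly. Concretely, let $x^\ast\in\Delta(n,m)$ attain $f_{\Delta(n,m)}$, write $m_i:=mx^\ast_i\in\oN$ (so that $\sum_i m_i=m$), and form the random vector $X=Y/r$ of (\ref{capitalX}) for this choice of parameters. By Lemma~\ref{lemmhdpoly} we have $f_{\Delta(n,r)}\le\mathbb{E}(f(X))$, while $f(x^\ast)=f_{\Delta(n,m)}$, so it suffices to bound the gap $\mathbb{E}(f(X))-f(x^\ast)$.

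First I would record the first and second moments of $X$. Writing $p_i:=m_i/m=x^\ast_i$, the mean is $\mathbb{E}(X_i)=p_i=x^\ast_i$, and from Corollary~\ref{momentcor} (equivalently, the classical variance/covariance of the multivariate hypergeometric law) one reads off
\begin{equation*}
\mathbb{E}(X_iX_j)-x^\ast_ix^\ast_j=\frac{m-r}{r(m-1)}\bigl(p_i\delta_{ij}-p_ip_j\bigr).
\end{equation*}
Since this correction is symmetric in $i,j$, summing it against $Q$ and using $Q_{ii}=f(e_i)$ together with $\sum_{i,j}Q_{ij}p_ip_j=(x^\ast)^TQx^\ast=f(x^\ast)$ yields the exact identity
\begin{equation*}
\mathbb{E}(f(X))-f(x^\ast)=\sum_{i,j}Q_{ij}\bigl(\mathbb{E}(X_iX_j)-x^\ast_ix^\ast_j\bigr)=\frac{m-r}{r(m-1)}\Bigl(\sum_{i=1}^n f(e_i)\,x^\ast_i-f(x^\ast)\Bigr).
\end{equation*}

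It then remains to bound the bracketed term by $\overline f-\underline f$. Since $x^\ast\in\Delta_n$, the sum $\sum_i f(e_i)x^\ast_i$ is a convex combination of the vertex values $f(e_1),\dots,f(e_n)$, each of which lies in $[\underline f,\overline f]$; hence $\sum_i f(e_i)x^\ast_i\le\overline f$, while $f(x^\ast)=f_{\Delta(n,m)}\ge\underline f$. Combining the displays gives $f_{\Delta(n,r)}-f_{\Delta(n,m)}\le\frac{m-r}{r(m-1)}(\overline f-\underline f)$, as claimed (the case $r=m$, where both sides vanish, is immediate).

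The only genuinely computational step is the moment evaluation: specializing Corollary~\ref{momentcor} to $\beta=e_i+e_j$ and $\beta=2e_i$ and simplifying the Stirling-number and falling-factorial expressions. I expect this to be the main, though routine, obstacle, since one must verify that the quadratic correction collapses to the single clean prefactor $\tfrac{m-r}{r(m-1)}$; everything afterwards is linear algebra plus the convexity estimate. A secondary point worth noting is that the choice $m_i=mx^\ast_i$ is legitimate precisely because membership $x^\ast\in\Delta(n,m)$ means $mx^\ast\in\oN^n$, so the grid minimizer supplies exactly the integer ball-counts required to set up the hypergeometric distribution.
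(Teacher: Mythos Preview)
Your proposal is correct and follows essentially the same approach as the paper: choose the hypergeometric parameters $m_i=mx^\ast_i$ from a grid minimizer, compute the second moments of $X$ via Corollary~\ref{momentcor}, and bound $\mathbb{E}(f(X))-f(x^\ast)$ using $Q_{ii}=f(e_i)\le\overline f$ and $f(x^\ast)\ge\underline f$ before invoking Lemma~\ref{lemmhdpoly}. Your packaging via the covariance identity $\mathbb{E}(X_iX_j)-x^\ast_ix^\ast_j=\tfrac{m-r}{r(m-1)}(p_i\delta_{ij}-p_ip_j)$ and the convex-combination bound $\sum_i f(e_i)x^\ast_i\le\overline f$ is slightly cleaner than the paper's direct expansion and use of $\max_i Q_{ii}\le\overline f$, but the argument is the same in substance.
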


\begin{proof}
Let  $x^*\in\Delta(n,m)$ be a  minimizer of $f$ over $\Delta(n,m)$, i.e.,  $f(x^*)=f_{\Delta(n,m)}$, and set $m_i= mx^*_i $ for  $i\in[n].$
If $m=1$, then $r=1$ and the result is trivial. Now assume $m\ge 2$.
Consider the random variable $X=(X_1,\ldots,X_n)$  defined as in (\ref{Yhyper}) and (\ref{capitalX}).
By Corollary \ref{momentcor},  one has
\begin{eqnarray*}
\mathbb{E}[X^2_i] &=& \left(\frac{m_i}{m}\right)^2\left(1  - \frac{m-r}{r(m-1)}+ \frac{m(m-r)}{rm_i(m-1)}\right) \ \ \ (i\in [n]), \\
\mathbb{E}[X_iX_j] &=& \frac{m_i}{m}\frac{m_j}{m}\left(1 - \frac{m-r}{r(m-1)} \right) \;\;\; (i\neq j\in [n]).
\end{eqnarray*}

Then, we have
\begin{eqnarray*}
\mathbb{E}\left[f(X)\right]
&=& \sum_{i,j\in[n]:i\neq j} Q_{ij} \mathbb{E}[X_iX_j] + \sum_{i=1}^n Q_{ii} \mathbb{E}[X_i^2]\\
&=& \sum_{i,j\in[n]:i\neq j} Q_{ij}\frac{m_i}{m}\frac{m_j}{m}\left(1 - \frac{m-r}{r(m-1)}\right)\\
&+& \sum_{i=1}^n Q_{ii}\left(\frac{m_i}{m}\right)^2\left(1  - \frac{m-r}{r(m-1)}+ \frac{m(m-r)}{rm_i(m-1)}\right)\\
&=& \sum_{i,j\in[n]} Q_{ij}x_i^*x_j^*\left(1 - \frac{m-r}{r(m-1)}\right) +  \frac{m-r}{r(m-1)}\sum_{i=1}^n Q_{ii}x_i^*\\
&\le& f(x^*) - \frac{m-r}{r(m-1)}\underline{f} +  \frac{m-r}{r(m-1)}\max_{i\in[n]} Q_{ii} \\
&\le& f(x^*) - \frac{m-r}{r(m-1)}\underline{f}  +  \frac{m-r}{r(m-1)}\overline f.
\end{eqnarray*}

Hence, we obtain
$$\mathbb{E}\left[f(X)\right]-f_{\Delta(n,m)}=\mathbb{E}\left[f(X)\right]-f(x^*) \le \frac{m-r}{r(m-1)}(\overline f-\underline{f}).$$

Using Lemma \ref{lemmhdpoly}, we can conclude the proof.
\qed
\end{proof}

\vspace{0.5cm}
When $f$ is quadratic, Vavasis \cite{SAV90} shows that there always exists a rational global minimizer $x^*$ for problem (\ref{funderline}). Say, $x^*$ has denominator $m$, i.e., $x^*\in \Delta(n,m)$. Our next result gives an upper bound for the error estimate $f_{\Delta(n,r)}-\underline f$, in terms of this denominator $m$.

 \begin{theorem}\label{thmquad}
 Let $f=x^T Qx \in \MH_{n,2}$, and let $x^*$ be a global minimizer of $f$ over $\Delta_n$, with denominator $m$.
For any    integer $r\ge 1$,  one has
\begin{equation*}
f_{\Delta(n,r)} - \underline f \le  \frac{m}{r^2}\left(\overline f - \underline f\right).
\end{equation*}
\end{theorem}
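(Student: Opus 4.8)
The plan is to reduce everything to the range estimate of Theorem~\ref{thmquadratic}. The starting observation is that the rational minimizer $x^*$, having denominator $m$, satisfies $m x^*\in\oN^n$, hence $x^*\in\Delta(n,m)$ and in fact $x^*\in\Delta(n,km)$ for every integer $k\ge 1$ (since $km\,x^*\in\oN^n$). As $x^*$ is a global minimizer over the whole simplex, this means $f_{\Delta(n,km)}=f(x^*)=\underline f$ for all $k\ge 1$. The case $m=1$ is immediate---then $x^*$ is a vertex of $\Delta_n$, so $f_{\Delta(n,r)}=\underline f$ for every $r$---so I would assume $m\ge 2$ from here on. I would then split the argument according to whether $r\le m$ or $r> m$.

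For $1\le r\le m$, Theorem~\ref{thmquadratic} applies directly with the given $m$, and since $f_{\Delta(n,m)}=\underline f$ it yields
\[
f_{\Delta(n,r)}-\underline f\le \frac{m-r}{r(m-1)}\,(\overline f-\underline f).
\]
It then suffices to verify the elementary inequality $\frac{m-r}{r(m-1)}\le \frac{m}{r^2}$. Clearing denominators turns this into $r^2-mr+(m^2-m)\ge 0$; regarded as a quadratic in $r$, its discriminant equals $m(4-3m)$, which is negative for $m\ge 2$, so the inequality holds for all $r$.

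The case $r>m$ is where the main idea is needed, and I expect it to be the crux: Theorem~\ref{thmquadratic} requires the sample size $r$ to be at most the number of balls, so I cannot draw from a box of only $m$ balls once $r>m$. The remedy is to apply the hypergeometric argument against a sufficiently refined grid. Setting $k:=\lceil r/m\rceil$ and $M:=km$, one has $r\le M\le r+m-1$ and $f_{\Delta(n,M)}=\underline f$ by the opening observation. Theorem~\ref{thmquadratic} with $M$ in place of $m$ then gives
\[
f_{\Delta(n,r)}-\underline f\le \frac{M-r}{r(M-1)}\,(\overline f-\underline f).
\]
Taking $M$ as small as possible is essential here, because $\frac{M-r}{r(M-1)}$ increases with $M$ and tends to $\frac1r$ as $M\to\infty$; that limiting bound is too weak, being larger than $\frac{m}{r^2}$ precisely when $r>m$. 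To finish I would check $\frac{M-r}{r(M-1)}\le \frac{m}{r^2}$, which after clearing denominators reads $M(r-m)\le r^2-m$; since $r\ge m$ the left-hand side is nondecreasing in $M$, so bounding $M\le r+m-1$ reduces the claim to $r\ge m(2-m)$, trivially true for $m\ge 2$. Combining the two cases establishes the bound for all $r\ge 1$.
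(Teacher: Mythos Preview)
Your proof is correct and follows essentially the same route as the paper: pick $k=\lceil r/m\rceil$, apply Theorem~\ref{thmquadratic} with $km$ in place of $m$ (using $f_{\Delta(n,km)}=\underline f$), and then verify the elementary inequality $\frac{km-r}{r(km-1)}\le\frac{m}{r^2}$. The only cosmetic difference is that the paper handles all $r$ at once via Lemma~\ref{lemkmr} (which is exactly your inequality $\frac{M-r}{M-1}\le\frac{m}{r}$ for $(k-1)m<r\le km$), whereas you split into $r\le m$ and $r>m$ and do the algebra directly; the first case is in fact just the instance $k=1$ of the second.
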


The proof uses the following easy fact (whose proof is omitted).

\begin{lemma}\label{lemkmr}
Let  $r,k,m\ge 1$ be integers such that  $(k-1)m< r\le km$. Then, $${km-r\over km-1}\le {m\over r}.$$
\end{lemma}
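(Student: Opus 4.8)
The final statement to prove is Lemma \ref{lemkmr}: under the hypotheses $r,k,m\ge 1$ integers with $(k-1)m<r\le km$, one has $(km-r)/(km-1)\le m/r$.

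\medskip

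The plan is to prove the equivalent cross-multiplied inequality and reduce it to a single clean quadratic/product comparison. Since $km\ge r\ge 1$, both denominators $km-1$ and $r$ are positive (when $km=1$ the claim is trivial, as then $r=1$ and the left side is $0$), so the inequality $(km-r)/(km-1)\le m/r$ is equivalent to
\[
r(km-r)\le m(km-1).
\]
First I would expand both sides: the left side is $kmr-r^2$ and the right side is $km^2-m$. Thus the goal becomes
\[
kmr-r^2\le km^2-m,
\]
which I would rearrange into the form
\[
km^2-kmr-m+r^2\ge 0.
\]

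\medskip

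The key step is to factor the left-hand side so that the hypothesis $(k-1)m<r$, i.e. $km-r<m$, can be applied. I would group the terms as
\[
km(m-r)+(r^2-m)=km(m-r)-(m-r^2),
\]
but a cleaner route is to write $km^2-kmr-m+r^2=km(m-r)-m+r^2$ and then isolate the factor $(m-r)$ differently. Concretely, I would verify the factorization
\[
km^2-kmr-m+r^2=(m-r)(km-r)-(m-r^2)+\big(\text{correction}\big),
\]
and if that is messy I would instead substitute $r=km-s$ where, by hypothesis, $s:=km-r$ satisfies $0\le s<m$. After substitution the inequality $r(km-r)\le m(km-1)$ becomes $(km-s)s\le m(km-1)$, i.e. $kms-s^2\le km^2-m$, which rearranges to
\[
km^2-kms-s^2+m\ge 0 \quad\Longleftrightarrow\quad km(m-s)+(m-s^2)\ge 0.
\]
Since $0\le s<m$, the first term $km(m-s)$ is nonnegative (it is $\ge 0$, and positive unless $s=m$ which is excluded), and I would then handle the residual term $m-s^2$: because $s\le m-1$ we have $s^2\le (m-1)^2$, so I would bound $km(m-s)\ge km\cdot 1=km\ge m$ (using $s\le m-1$ so $m-s\ge 1$, and $k\ge 1$), giving $km(m-s)+(m-s^2)\ge m+(m-s^2)\ge m+(m-(m-1)^2)$, and verify this is nonnegative.

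\medskip

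The main obstacle is choosing the substitution and grouping so that the strict hypothesis $(k-1)m<r$ (equivalently $s=km-r\le m-1$, using integrality) is exactly what makes the residual nonnegative; the integrality is essential, since $s<m$ with $s$ an integer gives $s\le m-1$, hence $m-s\ge 1$. I expect the cleanest finish to be: after substituting $s=km-r\in\{0,1,\dots,m-1\}$, the inequality reduces to $km(m-s)\ge s^2-m$, and since $m-s\ge 1$ and $k\ge 1$ the left side is at least $ms\ge s^2$ (because $s\le m-1<m$ gives $ms\ge s\cdot s=s^2$), which already exceeds $s^2-m$. I would present this last chain as the core of the argument, remarking that the hypothesis is used precisely to guarantee $s\le m-1$ and hence $m-s\ge 1$ and $ms\ge s^2$.
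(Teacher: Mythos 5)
Your overall strategy (clear denominators, substitute $s:=km-r$, and use integrality to get $s\le m-1$) is sound, and since the paper omits the proof of this lemma entirely, calling it an easy fact, any correct elementary argument would do. But your algebra after the substitution contains a sign error that breaks the proof. From $kms-s^2\le km^2-m$ the correct rearrangement is $km^2-m-kms+s^2\ge 0$, i.e.\ $km(m-s)+(s^2-m)\ge 0$, equivalently $km(m-s)\ge m-s^2$. You instead wrote $km^2-kms-s^2+m\ge 0$, i.e.\ $km(m-s)\ge s^2-m$, which is \emph{not} equivalent to the lemma and is in fact false under the hypotheses: take $k=1$, $m=5$, $r=1$ (so $s=4$ and $(k-1)m=0<r\le km=5$); then $km(m-s)=5$ while $s^2-m=11$. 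Consistently with this, the key link in your closing chain, $km(m-s)\ge ms$, also fails for the same data ($5\not\ge 20$): it would need $k(m-s)\ge s$, which is false when $s$ is close to $m$ and $k=1$. Your fallback bound $km(m-s)+(m-s^2)\ge m+\bigl(m-(m-1)^2\bigr)$ is negative for all $m\ge 4$, so the ``verify this is nonnegative'' step in your middle paragraph would have exposed the problem had you carried it out. (A smaller quibble: when $km=1$ the left-hand side of the lemma is $0/0$, not $0$, so the statement is only meaningful for $km\ge 2$.)

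The repair is immediate, and all the ingredients are already in your write-up: with the \emph{correct} rearrangement, integrality gives $m-s\ge 1$, hence $km(m-s)\ge m\ge m-s^2$, and you are done in one line. Alternatively, you can avoid cross-multiplication altogether: since $s\le m-1$,
\[
\frac{km-r}{km-1}\le \frac{m-1}{km-1}\le \frac{1}{k}\le \frac{m}{r},
\]
where the middle inequality is $k(m-1)\le km-1$ (i.e.\ $k\ge 1$) and the last is $r\le km$. As submitted, however, your proof reduces the lemma to a false inequality and then argues for that false inequality with an invalid bound, so it does not constitute a proof until the sign error is fixed.
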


\begin{proof} {\em (Proof of Theorem \ref{thmquad})}
Let $k\ge 1$ be an integer such that $(k-1)m <r \le km$.
We apply Theorem \ref{thmquadratic} to $r$ and $km$ (instead of $m$) and obtain that
$$f_{\Delta(n,r)}-f_{\Delta(n,km)}\le {km-r\over r(km-1)} (\overline f- \underline f).$$
Now, observe that $f_{\Delta(n,km)}=f_{\Delta(n,m)}=\underline f$, since $x^*\in \Delta(n,m)\subseteq \Delta(n,km)\subseteq \Delta_n$, and use the inequality from Lemma \ref{lemkmr}.
\qed\end{proof}

\medskip
As a direct  application of Theorem \ref{thmquad}, we see that the rate of convergence of the sequence $\rho_r(f)$ in (\ref{alpha}) 
 is in the order $O(1/r^2)$, where the constant depends only on  the denominator  of a rational global minimizer.

\begin{corollary}\label{corquad}
For any quadratic polynomial $f\in \MH_{n,2}$,
$\rho_r(f)=O({1/r^2})$.
\end{corollary}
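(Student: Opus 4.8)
The plan is to combine Theorem~\ref{thmquad} with Vavasis' theorem on the existence of rational minimizers for quadratic programming, since both tools are already in hand. The entire argument reduces to the observation that the denominator $m$ appearing in Theorem~\ref{thmquad} is a fixed quantity that does not depend on the grid parameter $r$.

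First I would invoke Vavasis' result \cite{SAV90}, which guarantees that the quadratic problem~(\ref{funderline}) admits a rational global minimizer $x^*\in\Delta_n$. Let $m$ be the denominator of $x^*$, so that $x^*\in\Delta(n,m)$; this integer depends only on $f$ (and in fact has bit-size polynomial in the input data), but is independent of $r$. Applying Theorem~\ref{thmquad} with this value of $m$ then yields, for every $r\ge 1$,
$$f_{\Delta(n,r)} - \underline f \le \frac{m}{r^2}\left(\overline f - \underline f\right).$$

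It remains to normalize. Assuming $f$ is non-constant on $\Delta_n$, so that $\overline f - \underline f>0$, dividing the displayed inequality by $\overline f - \underline f$ and using the definition~(\ref{alpha}) of $\rho_r(f)$ gives $\rho_r(f)\le m/r^2$; since $m$ is a constant independent of $r$, this is precisely the claimed $O(1/r^2)$ bound. The only degenerate case is $\overline f = \underline f$, i.e. $f$ constant on the simplex, in which case $f_{\Delta(n,r)}=\underline f$ for all $r$ and $\rho_r(f)$ is either identically zero or undefined by convention, so the statement holds trivially. I do not expect any real obstacle here: the substance of the result lies entirely in Theorem~\ref{thmquad} and in Vavasis' existence theorem, and this corollary merely records that, because $m$ is determined once $f$ is fixed, the $r$-dependence of the relative error is governed solely by the $1/r^2$ factor.
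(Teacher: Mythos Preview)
Your argument is correct and matches the paper's own reasoning: the corollary is recorded there as a ``direct application of Theorem~\ref{thmquad},'' relying implicitly on Vavasis' rational-minimizer result exactly as you do. Your extra care with the degenerate case $\overline f=\underline f$ is fine but not needed for the paper's purposes.
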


Moreover, the results of Theorems \ref{thmquadratic} and \ref{thmquad}
refine the known error estimate  from Theorem \ref{thmklsquad}, which shows that $\rho_r(f)\le {1\over r}$.  
To see it, use Theorem \ref{thmquadratic} and  the fact that ${m-r\over r(m-1)}\le {1\over r}$ if $1\le r\le m$, and use Theorem \ref{thmquad} and the inequality ${m\over r^2}\le {1\over r}$ in the case $r\ge m$.

\medskip
The following example shows that the inequality in Theorem \ref{thmquadratic} can be tight.

\begin{example}\cite[Example 2]{KLS13}
Consider the quadratic polynomial $f=\sum_{i=1}^nx_i^2$. Since $f$ is convex, one can easily check $\overline{f}=1$ (attained at any standard unit vector) and $\underline{f}={1\over n}$ (attained at $x={1\over n}e$, with denominator $m=n$).
 Moreover, for any integer $r\le n$, we have $f_{\Delta(n,r)}={1\over r}$. Thus, we have
$$f_{\Delta(n,r)}-\underline{f}={n-r\over r(n-1)}(\overline{f}-\underline{f})= {m-r\over r(m-1)}(\overline{f}-\underline{f}).$$
Hence, for this example,  the result in Theorem \ref{thmquadratic} is tight, while the result in Theorem \ref{thmklsquad} is not tight.
\end{example}

\section{Cubic and square-free polynomial optimizations over the simplex}\label{seccubsqf}

For the minimization of cubic and square-free polynomials over the standard simplex, the following results from  \cite{KLS13} refine Theorem \ref{thmklsgeneral}.

\begin{theorem}\label{thmklscubsqf}
\begin{itemize}
\item[(i)]
\cite[Corollary 2]{KLS13}
For any polynomial $f\in \mathcal{H}_{n,3}$ and $r\ge 2$, one has $$f_{\Delta(n,r)}-\underline{f}\le \left( {4\over r}-{4\over r^2} \right)\left(\overline{f}-\underline{f}\right).$$
\item[(ii)]\cite[Corollary 3]{KLS13}
For any square-free polynomial $f\in \MH_{n,d}$ and $r\ge 1$, one has
\begin{equation*}
f_{\Delta(n,r)}-\underline{f}\le \left( 1-{r^{\underline{d}}\over r^d} \right)\left(\overline f - \underline f\right).
\end{equation*}
\end{itemize}
\end{theorem}

We can show the following analogue of Theorem \ref{thmquadratic} for cubic and square-free polynomials. We delay the proof to Appendix \ref{appthm}, since the details are similar to the quadratic case (but more technical).

\begin{theorem}\label{thmcubsqf}
\begin{itemize}
\item[(i)]
Let $f\in \MH_{n,3}$. Given  integers $r,m$ satisfying $1\le r\le m$ and $m\ge 3$, one has
\begin{equation*}
f_{\Delta(n,r)} - f_{\Delta(n,m)} \le \frac{(m-r)(4mr-2m-2r)}{r^2(m-1)(m-2)}\left(\overline f - \underline f\right).
\end{equation*}
\item [(ii)]
 Let $f\in \MH_{n,d}$ be a  square-free polynomial. 
Given integers $r,m$ satisfying $1\le r\le m$ and $m\ge d$, one has
\begin{equation*}
f_{\Delta(n,r)}-f_{\Delta(n,m)}\le \left( 1-{r^{\underline{d}}\over r^d}{m^d\over m^{\underline{d}}} \right)\left(\overline f - \underline f\right).
\end{equation*}
\end{itemize}
\end{theorem}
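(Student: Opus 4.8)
To prove Theorem~\ref{thmcubsqf} I would mirror the proof of Theorem~\ref{thmquadratic} for both parts. Let $x^{*}\in\Delta(n,m)$ attain $f_{\Delta(n,m)}$, set $m_{i}=mx^{*}_{i}$ and write $t_{i}:=x^{*}_{i}$, and form the hypergeometric vector $X$ as in \eqref{capitalX}. By Lemma~\ref{lemmhdpoly} it suffices in each case to bound $\mathbb{E}[f(X)]-f(x^{*})$. The common first step is to read off the moments $\mathbb{E}[X^{\beta}]$ from Corollary~\ref{momentcor} for the monomial types that occur, and substitute $m_{i}=mt_{i}$.

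Part (ii) is the clean case. For a square-free $\beta$ (entries in $\{0,1\}$, $|\beta|=d$) only the top Stirling term $\alpha=\beta$ survives, since $S(1,0)=0$, giving
\[
\mathbb{E}[X^{\beta}]=\gamma\,x^{*\beta},\qquad \gamma:=\frac{r^{\underline{d}}}{r^{d}}\frac{m^{d}}{m^{\underline{d}}} .
\]
Summing, $\mathbb{E}[f(X)]=\gamma f(x^{*})=\gamma f_{\Delta(n,m)}$. Because $r\le m$ forces $\prod_{j=1}^{d-1}(1-j/r)\le\prod_{j=1}^{d-1}(1-j/m)$, we have $0\le\gamma\le1$, so by Lemma~\ref{lemmhdpoly}
\[
f_{\Delta(n,r)}-f_{\Delta(n,m)}\le-(1-\gamma)f_{\Delta(n,m)}\le-(1-\gamma)\underline{f}.
\]
It then remains only to note that for square-free $f$ of degree $d\ge2$ every vertex value $f(e_{i})$ vanishes, so $\overline{f}\ge0$; hence $-(1-\gamma)\underline{f}\le(1-\gamma)(\overline{f}-\underline{f})$, which is the claim (the case $d=1$ gives $\gamma=1$ and is trivial).

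For part (i) I would compute $\mathbb{E}[X_{i}^{3}]$, $\mathbb{E}[X_{i}^{2}X_{j}]$ and $\mathbb{E}[X_{i}X_{j}X_{k}]$ from Corollary~\ref{momentcor} (using $S(3,\cdot)=1,3,1$ and $S(2,\cdot)=1,1$), substitute $m_{i}=mt_{i}$, and collect $\mathbb{E}[f(X)]$ by degree in $t$. The degree-three part reassembles $f(x^{*})$, yielding
\[
\mathbb{E}[f(X)]=c_{3}\,f(x^{*})+c_{2}\,G+c_{1}\,L,
\]
where $L=\sum_{i}f(e_{i})t_{i}$, $G=3\sum_{i}f_{3e_{i}}t_{i}^{2}+\sum_{i\ne j}f_{2e_{i}+e_{j}}t_{i}t_{j}$, and $c_{3},c_{2},c_{1}$ are explicit rational functions of $r,m$. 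A bookkeeping check gives the identity $c_{3}+3c_{2}+c_{1}=1$ (equivalently, the decomposition is exact on $f\equiv1$), so
\[
\mathbb{E}[f(X)]-f(x^{*})=c_{2}\bigl(G-3f(x^{*})\bigr)+c_{1}\bigl(L-f(x^{*})\bigr).
\]
Here $L$ is a convex combination of the vertex values $f(e_{i})$ and $f(x^{*})=f_{\Delta(n,m)}\in[\underline{f},\overline{f}]$, so $|L-f(x^{*})|\le\overline{f}-\underline{f}$.

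The main obstacle is the degree-two term $G$. In the quadratic case the correction was $\sum_{i}Q_{ii}t_{i}=\sum_{i}f(e_{i})t_{i}$, a genuine average of function values; here $G/3$ is a convex combination of the Bernstein coefficients $f_{3e_{i}}$ and $f_{2e_{i}+e_{j}}/3$, and the latter are \emph{not} values of $f$, so \eqref{reprop1} only places $G/3$ in the (too large) Bernstein range. To control $G$ against $\overline{f}-\underline{f}$ I would use the de~Casteljau/blossoming identity
\[
\sum_{i}t_{i}\,f\bigl((1-\lambda)x^{*}+\lambda e_{i}\bigr)=(1-\lambda)^{2}(1+2\lambda)f(x^{*})+(1-\lambda)\lambda^{2}G+\lambda^{3}L,
\]
whose left-hand side is a convex combination of values of $f$ on the segments $[x^{*},e_{i}]\subseteq\Delta_{n}$ and therefore lies in $[\underline{f},\overline{f}]$ for every $\lambda\in[0,1]$. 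This one-parameter family of inequalities ties $G$ (and $L$) to $f(x^{*})$ through honest function values; feeding the resulting bounds into the displayed expression for $\mathbb{E}[f(X)]-f(x^{*})$ and inserting the explicit $c_{2},c_{1}$ should, after simplification, collapse to the stated coefficient $\tfrac{(m-r)(4mr-2m-2r)}{r^{2}(m-1)(m-2)}$. The two delicate points I expect are (a) the algebra verifying $c_{3}+3c_{2}+c_{1}=1$ together with the final simplification, and (b) selecting the value(s) of $\lambda$ (or a suitable measure in $\lambda$) so that the convexity identity yields exactly the bound on $G$ reproducing this coefficient rather than a weaker constant.
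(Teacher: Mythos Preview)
Your treatment of part~(ii) is correct and coincides with the paper's: the square-free moments collapse to the single leading Stirling term, giving $\mathbb{E}[f(X)]=\gamma f(x^{*})$ with $\gamma=r^{\underline d}m^{d}/(r^{d}m^{\underline d})\in[0,1]$, and the final step uses $\overline f\ge 0$ from $f(e_i)=0$.

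For part~(i), your decomposition $\mathbb{E}[f(X)]=c_{3}f(x^{*})+c_{2}G+c_{1}L$ with $c_{3}+3c_{2}+c_{1}=1$ is correct (indeed $c_{2}=\frac{(m-r)m(r-1)}{r^{2}(m-1)(m-2)}$ and $c_{1}=\frac{(m-r)(m-2r)}{r^{2}(m-1)(m-2)}$), but your proposed control of $G$ via the one-parameter blossoming identity centered at $x^{*}$ is a genuine gap. A single $\lambda$ gives $\mathbb{E}[f(X)]-f(x^{*})=\mu\bigl(H(\lambda)-f(x^{*})\bigr)$ only when $\lambda/(1-\lambda)=c_{1}/c_{2}$, which already fails for $m<2r$ (where $c_{1}<0$); and even when $m\ge 2r$ the resulting $\mu=(c_{1}+c_{2})^{3}/c_{1}^{2}$ does not match the stated coefficient (for $m=6$, $r=2$ one gets $\mu=32/5$ versus the theorem's $8/5$). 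No probability measure in $\lambda$ can repair this, because your identity already averages over $i$ with weights $t_{i}$ and therefore carries strictly less information than pointwise evaluations on the simplex.

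The paper's argument for part~(i) uses a different, sharper set of evaluations: not along the segments $[x^{*},e_{i}]$, but at the \emph{edge midpoints} $(e_{i}+e_{j})/2$. From $f\bigl((e_{i}+e_{j})/2\bigr)\le\overline f$ one gets $f_{ij}+g_{ij}\le 8\overline f-f_{i}-f_{j}$ for each pair $i<j$. Substituting this into $G=3\sum_{i}f_{i}t_{i}^{2}+\sum_{i<j}(f_{ij}+g_{ij})t_{i}t_{j}$ and then using $f_{i}\le\overline f$ yields the key estimate $G\le 4\overline f-L$, which is strictly stronger than the best your identity produces (at $\lambda=1/2$ it only gives $G\le 8\overline f-4f(x^{*})-L$). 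Feeding $G\le 4\overline f-L$ together with $\underline f\le L\le\overline f$ and $f(x^{*})\ge\underline f$ into $c_{2}(G-3f(x^{*}))+c_{1}(L-f(x^{*}))$ is what makes the terms telescope to exactly $\frac{(m-r)(4mr-2m-2r)}{r^{2}(m-1)(m-2)}(\overline f-\underline f)$. So your point~(b) is not merely ``delicate'': the segment family $[x^{*},e_{i}]$ is the wrong family, and you need the edge evaluations $f((e_{i}+e_{j})/2)$ instead.
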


When problem (\ref{funderline}) admits a rational global minimizer, then one can show that Theorem \ref{thmcubsqf} (ii) implies Theorem \ref{thmklscubsqf} (ii), and that Theorem \ref{thmcubsqf} (i) implies Theorem \ref{thmklscubsqf} (i) for $r\ge 1+{m-1\over \sqrt{2m}-1}$. We give the proofs for these statements in Appendix \ref{appimply}.

\ignore{
the above results imply the following error bounds from \cite{KLS13} for the range $f_{\Delta(n,r)}-\underline f$ (for the cubic case, the implication holds when $r\ge 1+{m-1\over \sqrt{2m}-1}$). For the sake of completeness, we give the proof for this statement in Appendix \ref{appimply}.

\begin{theorem}\label{thmklscubsqf}
\begin{itemize}
\item[(i)]
\cite[Corollary 2]{KLS13}
For any polynomial $f\in \mathcal{H}_{n,3}$ and $r\ge 2$, one has $$f_{\Delta(n,r)}-\underline{f}\le \left( {4\over r}-{4\over r^2} \right)\left(\overline{f}-\underline{f}\right).$$
\item[(ii)]\cite[Corollary 3]{KLS13}
For any square-free polynomial $f\in \MH_{n,d}$ and $r\ge 1$, one has
\begin{equation*}
f_{\Delta(n,r)}-\underline{f}\le \left( 1-{r^{\underline{d}}\over r^d} \right)\left(\overline f - \underline f\right).
\end{equation*}
\end{itemize}
\end{theorem}
}

\medskip
Theorem \ref{thmklscubsqf} shows that the ratio $\rho_r(f)$ is in the order $O(1/r)$. As an application of Theorem \ref{thmcubsqf}, we can show that  the ratio $\rho_r(f)$ is in the order $O(1/r^2)$ for cubic polynomials admitting a rational global minimizer over the simplex (see Corollary \ref{corcub}, whose proof is given in Appendix \ref{appcor}). The same holds for square-free polynomials as we will  see in the next section.

\begin{corollary}\label{corcub}
Let $f\in \MH_{n,3}$ and assume that $f$ has  a rational global minimizer in $\Delta_n$.
Then, $\rho_r(f)=O(1/r^2)$.
\end{corollary}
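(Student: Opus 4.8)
The plan is to derive Corollary \ref{corcub} from Theorem \ref{thmcubsqf}(i) by the same trick used to pass from Theorem \ref{thmquadratic} to Theorem \ref{thmquad}. Suppose $f\in\MH_{n,3}$ has a rational global minimizer $x^*\in\Delta_n$ with denominator $m$, so that $x^*\in\Delta(n,m)$ and hence $f_{\Delta(n,m)}=\underline f$. Since $m$ is fixed by $f$, it suffices to bound $f_{\Delta(n,r)}-\underline f$ and show the bound is $O(1/r^2)$ with a constant depending only on $m$. For the asymptotic statement I only care about large $r$, so I will work in the regime $r>m$ (equivalently $r\ge 3$ together with $r$ large), and the finitely many small values of $r$ are absorbed into the $O$-constant.

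First I would pick, as in the proof of Theorem \ref{thmquad}, an integer $k\ge 1$ with $(k-1)m<r\le km$, and apply Theorem \ref{thmcubsqf}(i) with the pair $(r,km)$ in place of $(r,m)$; this is legitimate since $1\le r\le km$ and $km\ge m\ge 3$. Because $x^*\in\Delta(n,m)\subseteq\Delta(n,km)$, we still have $f_{\Delta(n,km)}=\underline f$, so
\begin{equation*}
f_{\Delta(n,r)}-\underline f\le \frac{(km-r)(4\,km\,r-2\,km-2r)}{r^2(km-1)(km-2)}\left(\overline f-\underline f\right).
\end{equation*}
Thus $\rho_r(f)$ is at most the displayed coefficient, and the whole task reduces to showing that this coefficient is $O(1/r^2)$ as $r\to\infty$, with an implied constant depending only on $m$.

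Next I would estimate that coefficient. Write $M:=km$; note $r\le M<r+m$, so $M-r<m$ and $M=\Theta(r)$ (indeed $r\le M\le r+m\le 2r$ for $r\ge m$). The numerator factors as $(M-r)\cdot(4Mr-2M-2r)$. The first factor is at most $m$, a constant. The second factor $4Mr-2M-2r\le 4Mr=O(r^2)$. The denominator is $r^2(M-1)(M-2)=r^2\cdot\Theta(r^2)=\Theta(r^4)$. Hence the coefficient is $O(m\cdot r^2/r^4)=O(m/r^2)$, which is exactly $O(1/r^2)$ with a constant governed by the fixed denominator $m$. Combining this with the reduction above yields $\rho_r(f)=O(1/r^2)$.

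I do not anticipate a serious obstacle here; the argument is structurally identical to the quadratic case, the only extra work being the crude but sufficient size estimates on the cubic coefficient. The one point requiring mild care is making the factor $M-r$ work in my favor: since the denominator already carries a full $r^2$, I need the numerator to grow no faster than $O(r^2)$, and the bound $M-r\le m$ together with $4Mr-2M-2r=O(r^2)$ supplies precisely this, so the spurious-looking $4Mr$ term does not spoil the rate. If one wants an explicit constant rather than an $O$-statement, one can substitute $M\le r+m$ throughout and simplify, in the same spirit as Lemma \ref{lemkmr}; I would relegate that bookkeeping to the appendix, consistent with how the paper defers the analogous details.
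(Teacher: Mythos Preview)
Your proposal is correct and follows essentially the same route as the paper: apply Theorem \ref{thmcubsqf}(i) with $km$ in place of $m$ (where $(k-1)m<r\le km$), use $f_{\Delta(n,km)}=\underline f$, and then bound the resulting coefficient by $O(m/r^2)$. The only difference is cosmetic---the paper extracts explicit constants (obtaining $\rho_r(f)\le 6m/r^2$ for $r>m$ via Lemma \ref{lemkmr} and the estimate $\tfrac{4kmr-2km-2r}{r(km-2)}\le 6$), whereas you use asymptotic $\Theta$/$O$ estimates; one small slip is the claim ``$km\ge m\ge 3$'' (you never established $m\ge 3$), but since $km\ge r$ this is immediate once $r\ge 3$, which suffices for the asymptotic statement.
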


\section{General fixed-degree polynomial optimization over the simplex}\label{secgeneral}

In this section, we study the general fixed-degree polynomial optimization problem over the standard simplex. We first  upper bound  the range $f_{\Delta(n,r)}-f_{\Delta(n,m)}$ in terms of $\overline{f}-\underline{f}$.

\begin{theorem}\label{thmgeneral}
Let $f\in\mathcal{H}_{n,d}$. For any integers $r,m$ satisfying $1\le r\le m$ and $m\ge d$,
one has
\begin{eqnarray*}
f_{\Delta(n,r)}-f_{\Delta(n,m)}\le \left(1-{r^{\underline{d}}m^d\over r^dm^{\underline{d}}}\right){2d-1\choose d}d^d (\overline{f}-\underline{f}).
\end{eqnarray*}
\end{theorem}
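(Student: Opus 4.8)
The plan is to follow the same strategy that worked for the quadratic case in Theorem \ref{thmquadratic}, combining the moment formula from Corollary \ref{momentcor} with the bound on Bernstein coefficients from Theorem \ref{thmgene}. First I would let $x^*\in\Delta(n,m)$ be a minimizer of $f$ over $\Delta(n,m)$, so that $f(x^*)=f_{\Delta(n,m)}$, and set $m_i=mx_i^*$, which are nonnegative integers summing to $m$. These $m_i$ serve as the color-multiplicities defining the multivariate hypergeometric random variable $X=(X_1,\dots,X_n)$ as in (\ref{Yhyper}) and (\ref{capitalX}). By Lemma \ref{lemmhdpoly} it suffices to bound $\mathbb{E}(f(X))-f(x^*)$, so the whole argument reduces to estimating $\mathbb{E}(f(X))$ in terms of $f(x^*)$ and the range $\overline f-\underline f$.

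Next I would compute $\mathbb{E}(f(X))$ using Corollary \ref{momentcor}. Writing $f=\sum_{\beta\in I(n,d)}f_\beta x^\beta$, linearity of expectation gives
\begin{equation*}
\mathbb{E}(f(X))=\sum_{\beta\in I(n,d)}f_\beta\, m^{\beta}_{(n,r)}(X)
=\sum_{\beta\in I(n,d)}f_\beta\,\frac{1}{r^{d}}\sum_{\alpha\le\beta}\frac{r^{\underline{|\alpha|}}}{m^{\underline{|\alpha|}}}\prod_{i=1}^n m_i^{\underline{\alpha_i}}S(\beta_i,\alpha_i).
\end{equation*}
The key structural observation is that the leading term, corresponding to $\alpha=\beta$ (so $|\alpha|=d$ and $S(\beta_i,\beta_i)=1$), contributes $\frac{r^{\underline d}}{m^{\underline d}}\sum_\beta f_\beta\prod_i m_i^{\underline{\beta_i}}/r^d$, and one checks that $\frac{1}{m^d}\sum_\beta f_\beta\prod_i m_i^{\underline{\beta_i}}$ is exactly the value produced by the "without replacement" leading behavior; more usefully, the full expectation should be rewritten as a convex-combination-like expression over the Bernstein coefficients. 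The natural way to organize this is to show that $\mathbb{E}(f(X))$ is an affine combination of $f(x^*)$ (the $\alpha=\beta$ leading part recombining into $(m^{\underline d}/m^d)$-weighted monomials at $x^*$) and of the Bernstein coefficients $f_\beta\beta!/d!$, with the total weight on the Bernstein coefficients being precisely $1-\frac{r^{\underline d}m^d}{r^d m^{\underline d}}$. Once this is established, applying (\ref{reprop1}) to bound each Bernstein coefficient between $\min_\beta f_\beta\beta!/d!$ and $\max_\beta f_\beta\beta!/d!$, and then invoking Theorem \ref{thmgene} to bound that range by ${2d-1\choose d}d^d(\overline f-\underline f)$, yields the claimed estimate after noting $f(x^*)\ge\underline f$ absorbs correctly.

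The main obstacle I anticipate is the bookkeeping in the previous paragraph: proving rigorously that the non-leading terms (those with $|\alpha|<d$, arising from the Stirling numbers $S(\beta_i,\alpha_i)$ with $\alpha_i<\beta_i$) reassemble into a genuine convex combination of Bernstein coefficients with the correct total weight $1-\frac{r^{\underline d}m^d}{r^d m^{\underline d}}$. This requires carefully tracking how $\prod_i m_i^{\underline{\alpha_i}}$ and the factors $r^{\underline{|\alpha|}}/m^{\underline{|\alpha|}}$ combine across all $\beta$, and verifying that the coefficients are nonnegative and sum to one so that the bound (\ref{reprop1}) applies termwise. I expect that the cleanest route is to first prove the identity for $x^*$ being an arbitrary point of $\Delta_n$ at the level of expectations and then specialize, or alternatively to mirror the explicit term-by-term grouping used in the square-free case of Theorem \ref{thmcubsqf}(ii), where the factor $1-\frac{r^{\underline d}m^d}{r^d m^{\underline d}}$ already appears; the extra difficulty here over the square-free case is precisely that $\beta_i$ may exceed $1$, forcing the Stirling-number sum rather than a simple falling-factorial identity. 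Since the calculation parallels the quadratic proof but with $d$ general, I would present the weight identity as the central lemma and defer the combinatorial verification, then conclude exactly as in Theorem \ref{thmquadratic} via Lemma \ref{lemmhdpoly}.
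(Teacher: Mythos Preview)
Your proposal is correct and follows essentially the same route as the paper: set up the hypergeometric variable $X$ with parameters $m_i=mx_i^*$, expand $\mathbb{E}[f(X)]$ via Corollary~\ref{momentcor}, isolate the leading $\alpha=\beta$ contribution (which produces the factor $r^{\underline d}m^d/(r^dm^{\underline d})$ times $f(x^*)$), and show that the remaining terms form a nonnegative combination of the Bernstein coefficients $f_\beta\beta!/d!$ with total weight $1-r^{\underline d}m^d/(r^dm^{\underline d})$, after which Theorem~\ref{thmgene} and Lemma~\ref{lemmhdpoly} finish the argument. The paper packages exactly the ``central lemma'' you anticipate as Lemma~\ref{lemthec}: it defines explicit quantities $A_\beta$ capturing the non-leading part, proves $A_\beta\ge 0$ via the Stirling-number identity $m_i^{\beta_i}=\sum_{\alpha_i\le\beta_i}m_i^{\underline{\alpha_i}}S(\beta_i,\alpha_i)$, and computes $\sum_\beta (d!/\beta!)A_\beta=r^dm^{\underline d}-r^{\underline d}m^d$ using the Vandermonde--Chu identity together with Lemmas~\ref{lemsn1} and~\ref{lemsn2}; one minor difference is that the paper lower-bounds $f(x^*)$ by $\min_\beta f_\beta\beta!/d!$ rather than by $\underline f$, though either choice leads to the same final estimate after invoking Theorem~\ref{thmgene}.
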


\noindent Note that when $f$ is square free, we have proved a better bound in Theorem \ref{thmcubsqf} (ii).

\noindent For the proof of Theorem \ref{thmgeneral}, we will use the following Vandermonde-Chu identity
\begin{equation}\label{vcid}
\left(\sum_{i=1}^n x_i\right)^{\underline{d}}=\sum_{\alpha\in I(n,d)}{d!\over \alpha!}x^{\underline{\alpha}}\ \ \ \ \forall x\in\oR^n\ \
\end{equation}
{(see \cite{PR01})}, as well as the multinomial theorem (\ref{multithm}).
We will also need the following two lemmas about the Stirling numbers of the second kind.
\begin{lemma}\label{lemsn1}(e.g. \cite[Lemma 3]{KLS13})
For any positive integer $d$ and $r\ge 1$, one has
\begin{equation*}\label{reptas2}
\sum_{k=1}^{d-1}r^{\underline{k}}S(d,k)=r^d-r^{\underline{d}}.
\end{equation*}
\end{lemma}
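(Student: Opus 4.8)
The plan is to deduce this from the classical expansion of a $d$-th power in terms of falling factorials weighted by Stirling numbers of the second kind, namely
$$r^d=\sum_{k=0}^d S(d,k)\, r^{\underline{k}},$$
and then to isolate the two boundary terms. First I would establish this expansion for every integer $r\ge 1$ by a double-counting argument. The left-hand side counts all functions $\phi\colon [d]\to [r]$, of which there are exactly $r^d$. To classify them, group $[d]$ according to the fibers of $\phi$: if the image of $\phi$ has exactly $k$ elements, then $\phi$ is determined by (a) the partition of $[d]$ into its $k$ nonempty fibers, and (b) an injective assignment of these $k$ blocks to distinct values in $[r]$. By definition there are $S(d,k)$ such partitions, and $r^{\underline{k}}=r(r-1)\cdots(r-k+1)$ such injections. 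Summing over the possible image sizes $k$ (with the convention that both factors vanish outside the admissible range) yields the displayed identity.

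Next I would read off the two extreme terms. Since $d\ge 1$, the term $k=0$ contributes nothing because $S(d,0)=0$ (a nonempty set cannot be partitioned into zero blocks). The term $k=d$ contributes $S(d,d)\,r^{\underline{d}}=r^{\underline{d}}$, since $S(d,d)=1$ (the only partition of $[d]$ into $d$ nonempty blocks is into singletons). Separating these from the sum gives
$$r^d=\sum_{k=1}^{d-1}S(d,k)\,r^{\underline{k}}+r^{\underline{d}},$$
and rearranging produces exactly the claimed equality $\sum_{k=1}^{d-1}r^{\underline{k}}S(d,k)=r^d-r^{\underline{d}}$.

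There is no serious obstacle here, as this is a standard identity; the only point requiring a little care is the justification of the base expansion, and in particular the correct bookkeeping of the endpoint values $S(d,0)=0$ and $S(d,d)=1$. An alternative to the combinatorial argument would be induction on $d$ using the recurrence $S(d,k)=S(d-1,k-1)+k\,S(d-1,k)$ together with $r^{\underline{k}}=r^{\underline{k-1}}(r-k+1)$, but the function-counting proof is cleaner and makes transparent why both endpoint Stirling numbers take the values they do. One may also remark that both sides are polynomials in $r$ of degree $d$ that agree at infinitely many integers, so the identity holds as a polynomial identity, although only the stated range $r\ge 1$ is needed in the sequel.
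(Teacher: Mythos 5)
Your proof is correct: the paper itself offers no argument for this lemma, simply citing \cite[Lemma 3]{KLS13}, and your function-counting derivation of the expansion $r^d=\sum_{k=0}^d S(d,k)\,r^{\underline{k}}$, followed by stripping off the endpoint terms via $S(d,0)=0$ (for $d\ge 1$) and $S(d,d)=1$, is exactly the standard proof underlying that citation. The bookkeeping of the boundary values and the remark that the identity extends polynomially are both accurate, so nothing needs to be changed.
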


\begin{lemma}\label{lemsn2}(e.g. \cite[Lemma 4]{KLS13})
Given $\alpha\in I(n,k)$ and $d>k$, one has
\begin{equation*}\label{reptas3}
S(d,k)={\alpha!\over k!}\sum_{\beta\in I(n,d)}{d!\over\beta!}\prod_{i=1}^n S(\beta_i,\alpha_i).
\end{equation*}
\end{lemma}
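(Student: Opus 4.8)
The statement to prove is the Stirling-number identity
$$
S(d,k)=\frac{\alpha!}{k!}\sum_{\beta\in I(n,d)}\frac{d!}{\beta!}\prod_{i=1}^n S(\beta_i,\alpha_i),
\qquad \alpha\in I(n,k),\ d>k.
$$
My plan is to clear denominators and instead establish the equivalent identity
$$
\sum_{\beta\in I(n,d)}\frac{d!}{\beta!}\prod_{i=1}^n S(\beta_i,\alpha_i)=\frac{k!}{\alpha!}\,S(d,k),
$$
obtained by multiplying both sides by $k!/\alpha!$. I would prove this by counting a single family of combinatorial objects in two different ways. Here $\frac{d!}{\beta!}$ is the multinomial coefficient and I use the defining interpretation of $S(a,b)$ as the number of partitions of an $a$-element set into $b$ nonempty (unordered) blocks.

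\textbf{The object and the two counts.} Consider pairs $(\pi,\phi)$, where $\pi$ is a partition of $[d]$ into exactly $k$ nonempty blocks, and $\phi$ assigns each of these $k$ blocks to a group in $[n]$ so that group $i$ receives exactly $\alpha_i$ blocks (this is consistent since $\sum_i\alpha_i=k$). For the first count, I fix $\pi$ first: there are $S(d,k)$ choices, and then the $k$ (distinct) blocks are distributed among the $n$ labeled groups with prescribed multiplicities $\alpha_1,\dots,\alpha_n$, which can be done in $\frac{k!}{\alpha!}$ ways; this gives $\frac{k!}{\alpha!}S(d,k)$. For the second count, I build $(\pi,\phi)$ the other way around: I first split $[d]$ into labeled parts $B_1,\dots,B_n$ of sizes $\beta_1,\dots,\beta_n$ (summing to $d$), which for a fixed composition $\beta\in I(n,d)$ can be done in $\frac{d!}{\beta!}$ ways, and then within each $B_i$ I choose a partition into $\alpha_i$ nonempty blocks, in $S(\beta_i,\alpha_i)$ ways. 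Summing over $\beta\in I(n,d)$ yields $\sum_{\beta}\frac{d!}{\beta!}\prod_i S(\beta_i,\alpha_i)$. The map sending a second-count configuration to $(\pi,\phi)$ — take $\pi$ to be the collection of all the subblocks and let $\phi$ send each subblock of $B_i$ to group $i$ — is a bijection onto the first-count family, so the two counts agree and the identity follows. Terms with $\beta_i<\alpha_i$ for some $i$ contribute nothing, since then $S(\beta_i,\alpha_i)=0$, so they may be kept in the sum harmlessly.

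\textbf{Alternative and main obstacle.} A slick alternative avoids the bijection via exponential generating functions: using $\sum_{m\ge 0}S(m,\alpha_i)\frac{x^m}{m!}=\frac{(e^x-1)^{\alpha_i}}{\alpha_i!}$ and multiplying over $i$ gives $\prod_{i=1}^n\frac{(e^x-1)^{\alpha_i}}{\alpha_i!}=\frac{(e^x-1)^k}{\alpha!}=\frac{k!}{\alpha!}\sum_{m\ge 0}S(m,k)\frac{x^m}{m!}$; comparing the coefficient of $x^d/d!$ on both sides, where the product rule for exponential generating functions produces exactly $\sum_{\beta\in I(n,d)}\frac{d!}{\beta!}\prod_i S(\beta_i,\alpha_i)$, reproduces the identity. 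Either route is elementary and the hypothesis $d>k$ plays no essential role (the argument is valid for $d\ge k$, and both sides vanish when $d<k$). The only real care needed — and the step I would check most carefully — is the bookkeeping of ordered versus unordered structure: the labeled groups $B_i$ and the multinomial split are \emph{ordered}, whereas the blocks produced by each $S(\beta_i,\alpha_i)$ are \emph{unordered}, and it is precisely this that makes the block-to-group assignment count $\frac{k!}{\alpha!}$ rather than some other quantity. Verifying that the bijection respects this distinction (equivalently, that the EGF product rule is applied with the correct factorial normalizations) is the one point where an error could creep in.
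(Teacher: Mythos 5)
Your proof is correct: the double count of pairs $(\pi,\phi)$ — with $S(d,k)$ choices of the partition, the multinomial factor $k!/\alpha!$ for assigning the $k$ distinct unordered blocks to the $n$ labeled groups, and the reverse construction via an ordered split of $[d]$ into parts $B_i$ of sizes $\beta_i$ followed by partitions counted by $S(\beta_i,\alpha_i)$ — is a genuine bijection, and your observation that terms with $\beta_i<\alpha_i$ vanish (since $S(\beta_i,\alpha_i)=0$) correctly handles the degenerate summands. The paper itself gives no proof of this lemma, deferring to \cite[Lemma 4]{KLS13}, where the identity is established by essentially this same combinatorial counting of set partitions refined by a coloring, so your argument matches the intended one; your exponential-generating-function alternative, with the normalizations $\sum_{m\ge 0}S(m,a)x^m/m!=(e^x-1)^a/a!$ checked as you did, is an equally valid route.
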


Furthermore, we will use the following technical result.
\begin{lemma}\label{lemthec}
Given $\beta\in I(n,d)$, for any integers $r,m$ with $1\le r\le m$, $m\ge d$ and integers $m_i$ $(i\in[n])$ with $\sum_{i=1}^nm_i=m$, one has
\begin{eqnarray}
A_\beta:=r^{\underline{d}}\left(\prod_{i=1}^n {m_i}^{\underline{\beta_i}}- \prod_{i=1}^n {m_i}^{\beta_i} \right)&+&\sum_{\alpha\in\oN^n:\alpha\le \beta,\alpha\neq\beta}{r^{\underline{|\alpha|}}m^{\underline{d}}\over m^{\underline{|\alpha|}}}\prod_{i=1}^n{m_i}^{\underline{\alpha_i}}S(\beta_i,\alpha_i)\ge0,\label{defab}\\
\sum_{\beta\in I(n,d)}{d!\over \beta!}A_{\beta}&=&r^dm^{\underline{d}}-r^{\underline{d}}m^d.\label{claimab}
\end{eqnarray}
\end{lemma}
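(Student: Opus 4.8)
The plan is to rewrite $A_\beta$ once in a form that makes both assertions transparent, then to dispatch the summation identity (\ref{claimab}) quickly and reserve the real work for the nonnegativity (\ref{defab}). First I would notice that the isolated term $r^{\underline d}\prod_{i=1}^n m_i^{\underline{\beta_i}}$ hidden inside the first bracket of (\ref{defab}) is precisely the $\alpha=\beta$ summand of $\sum_{\alpha\le\beta}\frac{r^{\underline{|\alpha|}}m^{\underline d}}{m^{\underline{|\alpha|}}}\prod_i m_i^{\underline{\alpha_i}}S(\beta_i,\alpha_i)$, because $|\beta|=d$ and $S(\beta_i,\beta_i)=1$. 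Absorbing it turns the defining sum over $\alpha\neq\beta$ into a sum over all $\alpha\le\beta$, so
$$A_\beta=\sum_{\alpha\in\oN^n:\alpha\le\beta}\frac{r^{\underline{|\alpha|}}m^{\underline d}}{m^{\underline{|\alpha|}}}\prod_{i=1}^n m_i^{\underline{\alpha_i}}S(\beta_i,\alpha_i)-r^{\underline d}\prod_{i=1}^n m_i^{\beta_i}.$$
By Theorem \ref{momentthm} the remaining sum equals $m^{\underline d}\,\mathbb{E}\!\left(\prod_i Y_i^{\beta_i}\right)$, hence $A_\beta=m^{\underline d}\,\mathbb{E}\!\left(\prod_i Y_i^{\beta_i}\right)-r^{\underline d}\prod_i m_i^{\beta_i}$.

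Given this, I would prove (\ref{claimab}) first, as it is the cleaner computation. Multiplying by $d!/\beta!$ and summing over $\beta\in I(n,d)$, linearity of expectation gives $m^{\underline d}\,\mathbb{E}\!\left(\sum_\beta \frac{d!}{\beta!}\prod_i Y_i^{\beta_i}\right)-r^{\underline d}\sum_\beta\frac{d!}{\beta!}\prod_i m_i^{\beta_i}$. The multinomial theorem (\ref{multithm}) identifies the inner sum as $\left(\sum_i Y_i\right)^d=r^d$ (the hypergeometric sample always satisfies $\sum_i Y_i=r$, so this is a deterministic constant), and identifies the second sum as $\left(\sum_i m_i\right)^d=m^d$. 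This yields $r^dm^{\underline d}-r^{\underline d}m^d$, exactly (\ref{claimab}).

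For the nonnegativity (\ref{defab}) the key step is to expand the ordinary powers $\prod_i m_i^{\beta_i}$ into falling factorials via the defining identity $x^{\ell}=\sum_{k}S(\ell,k)x^{\underline k}$ of the Stirling numbers of the second kind, applied in each coordinate:
$$\prod_{i=1}^n m_i^{\beta_i}=\sum_{\alpha\in\oN^n:\alpha\le\beta}\left(\prod_{i=1}^n S(\beta_i,\alpha_i)\right)\prod_{i=1}^n m_i^{\underline{\alpha_i}}.$$
Substituting this into the simplified form of $A_\beta$ merges the two pieces into a single sum,
$$A_\beta=\sum_{\alpha\in\oN^n:\alpha\le\beta}\left(\frac{r^{\underline{|\alpha|}}m^{\underline d}}{m^{\underline{|\alpha|}}}-r^{\underline d}\right)\prod_{i=1}^n m_i^{\underline{\alpha_i}}S(\beta_i,\alpha_i).$$
Since $m_i\in\oN$ forces $m_i^{\underline{\alpha_i}}\ge0$ and $S(\beta_i,\alpha_i)\ge0$, it suffices to show that every coefficient is nonnegative, i.e.\ $r^{\underline k}m^{\underline d}\ge r^{\underline d}m^{\underline k}$ for $k:=|\alpha|\in\{0,\dots,d\}$.

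The last inequality is where I expect the only genuine care to be needed. I would factor $m^{\underline d}=m^{\underline k}(m-k)^{\underline{d-k}}$ and $r^{\underline d}=r^{\underline k}(r-k)^{\underline{d-k}}$, reducing the claim to $r^{\underline k}m^{\underline k}\bigl((m-k)^{\underline{d-k}}-(r-k)^{\underline{d-k}}\bigr)\ge0$. The prefactors satisfy $r^{\underline k}\ge0$ and $m^{\underline k}\ge0$, and $(m-k)^{\underline{d-k}}>0$ because $m\ge d$ makes all $d-k$ descending factors positive (down to $m-d+1\ge1$). It then remains to check $(m-k)^{\underline{d-k}}\ge(r-k)^{\underline{d-k}}$: if $r\ge d$ both sides are products of $d-k$ positive terms with $r-k-j\le m-k-j$ for each $j$, so the $m$-product dominates; if $r<d$ and $k\le r$ the factor $r-r=0$ occurs in $(r-k)^{\underline{d-k}}$, making it $0\le(m-k)^{\underline{d-k}}$; and if $k>r$ then $r^{\underline k}=0$ kills the whole term. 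This gives nonnegativity of each coefficient, hence $A_\beta\ge0$, proving (\ref{defab}). The main obstacle is thus purely the bookkeeping of these falling-factorial edge cases; the conceptual content is the Stirling-number expansion of $\prod_i m_i^{\beta_i}$, which exposes the common nonnegative structure underlying $A_\beta$.
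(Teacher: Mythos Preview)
Your proof is correct. For the nonnegativity (\ref{defab}) your argument is essentially the paper's: both hinge on the same Stirling expansion $\prod_i m_i^{\beta_i}=\sum_{\alpha\le\beta}\prod_i m_i^{\underline{\alpha_i}}S(\beta_i,\alpha_i)$ and the same key inequality $r^{\underline{k}}m^{\underline{d}}\ge r^{\underline{d}}m^{\underline{k}}$. The paper lower-bounds each coefficient $\frac{r^{\underline{|\alpha|}}m^{\underline d}}{m^{\underline{|\alpha|}}}$ by $r^{\underline d}$ and then recognises the resulting sum as $r^{\underline d}\cdot 0$; you instead subtract first and display $A_\beta$ as a sum of manifestly nonnegative terms, and you handle the falling-factorial edge cases ($r<d$, $k>r$) more carefully than the paper's ``one can easily check''. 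For the identity (\ref{claimab}), however, your route is genuinely different and considerably shorter: recognising the full sum over $\alpha\le\beta$ as $m^{\underline d}\,\mathbb{E}\bigl[\prod_i Y_i^{\beta_i}\bigr]$ via Theorem~\ref{momentthm} and then using $\sum_i Y_i\equiv r$ together with the multinomial theorem collapses everything to $r^d m^{\underline d}-r^{\underline d}m^d$ in one line. The paper instead splits $\sum_\beta\frac{d!}{\beta!}A_\beta$ into two pieces $C_1,C_2$ and evaluates them separately using the Vandermonde--Chu identity (\ref{vcid}), an interchange of summations, and Lemmas~\ref{lemsn1} and~\ref{lemsn2}; your probabilistic shortcut bypasses all of this. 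The only tacit assumption you add is $m_i\ge 0$ (needed for the hypergeometric interpretation), but the paper's nonnegativity argument already uses this implicitly, and in the application one always has $m_i=mx_i^*\ge 0$.
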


\begin{proof}
We first prove (\ref{defab}).
For any $\alpha\in\oN^n$ with $|\alpha|\le d$, one can easily check that ${r^{\underline{|\alpha|}}\over r^{\underline{d}}}\ge{m^{\underline{|\alpha|}}\over m^{\underline{d}}}$, that is, $r^{\underline{d}}\le {r^{\underline{|\alpha|}}m^{\underline{d}}\over m^{\underline{|\alpha|}}}.$ Hence, one has
\begin{eqnarray*}
A_\beta&=&r^{\underline{d}}\left(\prod_{i=1}^n {m_i}^{\underline{\beta_i}}- \prod_{i=1}^n {m_i}^{\beta_i} \right)+\sum_{\alpha\in\oN^n:\alpha\le \beta,\alpha\neq\beta}{r^{\underline{|\alpha|}}m^{\underline{d}}\over m^{\underline{|\alpha|}}}\prod_{i=1}^n{m_i}^{\underline{\alpha_i}}S(\beta_i,\alpha_i)\\
&\ge&
r^{\underline{d}}
\underbrace{\left(\prod_{i=1}^n {m_i}^{\underline{\beta_i}}- \prod_{i=1}^n {m_i}^{\beta_i} +
\sum_{\alpha\in\oN^n:\alpha\le \beta,\alpha\neq\beta}\prod_{i=1}^n{m_i}^{\underline{\alpha_i}}S(\beta_i,\alpha_i)\right)}_{:=B_\beta}
= r^{\underline d} B_\beta.\\
\end{eqnarray*}

Then we consider the quantity $B_\beta$ and show that $B_\beta=0$.
As $S(\beta_i,\beta_i)=1$, one can rewrite $B_\beta$ as
\begin{equation*}
{B_\beta }= \sum_{\alpha\in\oN^n:\alpha\le \beta}\prod_{i=1}^n{m_i}^{\underline{\alpha_i}}S(\beta_i,\alpha_i) - \prod_{i=1}^n {m_i}^{\beta_i}.
\end{equation*}

Applying Lemma \ref{lemsn1} (with $(m_i,\beta_i)$ in place of $(r,d)$), we  have
$${m_i}^{\beta_i}=\sum_{\alpha_i=0}^{\beta_i}{m_i}^{\underline{\alpha_i}}S(\beta_i,\alpha_i), \
\text{ implying } \
\prod_{i=1}^n {m_i}^{\beta_i}=\sum_{\alpha\in\oN^n:\alpha\le \beta}\prod_{i=1}^n{m_i}^{\underline{\alpha_i}}S(\beta_i,\alpha_i),$$
which shows that $B_\beta=0$, and thus $A_\beta\ge 0$, which concludes the proof of (\ref{defab}).

We now show (\ref{claimab}). 
 By the definition (\ref{defab}), one has
\begin{eqnarray*}\label{re5}
\sum_{\beta\in I(n,d)}{d!\over \beta!}A_{\beta}
&=& \underbrace{\sum_{\beta\in I(n,d)}{d!\over \beta!}r^{\underline{d}}\left(\prod_{i=1}^n {m_i}^{\underline{\beta_i}}- \prod_{i=1}^n {m_i}^{\beta_i} \right)}_{:=C_1}\\
&+& \underbrace{\sum_{\beta\in I(n,d)}{d!\over \beta!}\sum_{\alpha\in\oN^n:\alpha\le \beta,\alpha\neq\beta}{r^{\underline{|\alpha|}}m^{\underline{d}}\over m^{\underline{|\alpha|}}}\prod_{i=1}^n{m_i}^{\underline{\alpha_i}}S(\beta_i,\alpha_i)}_{:=C_2}.
\end{eqnarray*}

On the one hand, using the  Vandermonde-Chu identity (\ref{vcid}), the multinomial theorem (\ref{multithm}) and the identity $\sum_{i=1}^nm_i=m$, we find
\begin{eqnarray*}
C_1 = r^{\underline{d}}(m^{\underline{d}}-m^d).
\end{eqnarray*}

On the other hand, exchanging the summations in the definition of $C_2$, one obtains
\begin{eqnarray*}
C_2&=& m^{\underline{d}}\sum_{k=1}^{d-1}\sum_{\alpha\in I(n,k)}{r^{\underline{|\alpha|}}\over m^{\underline{|\alpha|}}}\prod_{i=1}^n{m_i}^{\underline{\alpha_i}} \left(\sum_{\beta\in I(n,d)}{d!\over\beta!}\prod_{i=1}^nS(\beta_i,\alpha_i)\right)\\
&=& m^{\underline{d}}\sum_{k=1}^{d-1}{r^{\underline{k}}\over m^{\underline{k}}}S(d,k)\left(\sum_{\alpha\in I(n,k)}{k!\over\alpha!}\prod_{i=1}^n{m_i}^{\underline{\alpha_i}}\right) \hspace*{2.3cm} \text{[using Lemma \ref{lemsn2}]}\\
&=& m^{\underline{d}}\sum_{k=1}^{d-1}r^{\underline{k}}S(d,k) \hspace*{3.1cm}\text{[using Vandermonde-Chu identity (\ref{vcid})]}\\
&=& m^{\underline{d}}(r^d-r^{\underline{d}}) \hspace*{7.1cm}\text{[using Lemma \ref{lemsn1}]}
\end{eqnarray*}

We can now conclude that  $\sum_{\beta\in I(n,d)}{d!\over \beta!}A_{\beta}=C_1+C_2=r^dm^{\underline{d}}-r^{\underline{d}}m^d$.
\qed
\end{proof}

\vspace{0.5cm}
Now we are ready to prove Theorem \ref{thmgeneral}.

\begin{proof}{\em (of Theorem \ref{thmgeneral})}
Let $x^*\in\Delta(n,m)$ be a minimizer of $f$ over $\Delta(n,m)$, i.e.,  $f(x^*)=f_{\Delta(n,m)}$. Set $m_i= mx^*_i$ for $i\in[n].$
Let the random variables $X_i$ be defined as in (\ref{Yhyper}) and (\ref{capitalX}), so that the random variable  $X=(X_1,X_2,\dots,X_n)$ takes its values in $\Delta(n,r)$.
By  Corollary \ref{momentcor} we have:
\begin{eqnarray*}
\mathbb{E}[X^{\beta}]={1\over r^{d}}\sum_{\alpha\in\oN^n:\alpha\le\beta}{r^{\underline{|\alpha|}}\over m^{\underline{|\alpha|}}}\prod_{i=1}^n {m_i}^{\underline{\alpha_i}}S(\beta_i,\alpha_i).
\end{eqnarray*}

Then, as $S(\beta_i,\beta_i)=1$, we can rewrite
\begin{eqnarray*}
\mathbb{E}[X^{\beta}]&=& {1\over r^{d}}{r^{\underline{d}}\over m^{\underline{d}}}\prod_{i=1}^n {m_i}^{\underline{\beta_i}}+\underbrace{{1\over r^{d}}\sum_{\alpha\in\oN^n:\alpha\le\beta,\alpha\ne \beta}{r^{\underline{|\alpha|}}\over m^{\underline{|\alpha|}}}\prod_{i=1}^n {m_i}^{\underline{\alpha_i}}S(\beta_i,\alpha_i)}_{:=D_\beta} \\
&=&\prod_{i=1}^n \left({m_i\over m}\right)^{\beta_i}\left[ {r^{\underline{d}}\over r^d}{m^d\over m^{\underline{d}}}+{r^{\underline{d}}\over r^d}{m^d\over m^{\underline{d}}}\left(\prod_{i=1}^n{{m_i}^{\underline{\beta_i}}\over {m_i}^{\beta_i}}-1\right)\right]+D_\beta\\
&=& \underbrace{\prod_{i=1}^n \left({m_i\over m}\right)^{\beta_i}{r^{\underline{d}}\over r^d}{m^d\over m^{\underline{d}}}}_{:=T_1}+\underbrace{{r^{\underline{d}}\over r^d m^{\underline{d}}}\left(\prod_{i=1}^n {m_i}^{\underline{\beta_i}}- \prod_{i=1}^n {m_i}^{\beta_i} \right)+D_\beta}_{:=T_2} \\
&=& T_1+T_2 = (x^*)^{\beta}{r^{\underline{d}}m^d\over r^dm^{\underline{d}}}
+{A_{\beta}\over r^dm^{\underline{d}}}.
\end{eqnarray*}
For the above  last equality,
note that, since $x_i^*={m_i\over m}$, one has
$$T_1=(x^*)^{\beta}{r^{\underline{d}}m^d\over r^dm^{\underline{d}}}$$
and
 using the definition of $A_{\beta}$ in (\ref{defab}), we can write
$$T_2={A_{\beta}\over r^dm^{\underline{d}}}.$$

Thus we obtain
\begin{eqnarray*}
\mathbb{E}[f(X)]&=& \mathbb{E}[\sum_{\beta\in I(n,d)}f_{\beta}X^{\beta}]=\sum_{\beta\in I(n,d)}f_{\beta}\mathbb{E}[X^{\beta}] \\
&=&{r^{\underline{d}}\over r^d}{m^d\over m^{\underline{d}}}f(x^*)
+{1\over r^dm^{\underline{d}}}\sum_{\beta\in I(n,d)}f_{\beta}A_{\beta}.
\end{eqnarray*}

Therefore, we have
\begin{eqnarray*}
r^dm^{\underline{d}}(\mathbb{E}[f(X)]-f(x^*))=(r^{\underline{d}}m^d-r^dm^{\underline{d}})f(x^*)
+\sum_{\beta \in I(n,d)} f_\beta A_\beta.
\end{eqnarray*}

We now upper bound the two terms $(r^{\underline{d}}m^d-r^dm^{\underline{d}})f(x^*)$ and
$\sum_{\beta \in I(n,d)} f_\beta A_\beta$.

First,
since $r^{\underline{d}}m^d-r^dm^{\underline{d}}<0$ and $f(x^*)\ge \min_{\beta\in I(n,d)}f_{\beta}$ (see (\ref{reprop1})), one obtains
\begin{eqnarray}
(r^{\underline{d}}m^d-r^dm^{\underline{d}})f(x^*) \le
(r^{\underline{d}}m^d-r^dm^{\underline{d}})\left(\min_{\beta\in I(n,d)}f_{\beta}{\beta!\over d!}\right).\label{re11}
\end{eqnarray}

Second, using the fact that  $A_\beta\ge 0$ (by Lemma \ref{lemthec}), 
one obtains
\begin{eqnarray*}
\sum_{\beta\in  I(n,d)} f_\beta A_\beta \le 
 \left(\max_{\beta\in I(n,d)}f_{\beta}{\beta!\over d!}\right)\sum_{\beta\in I(n,d)}{d!\over \beta!}A_{\beta}.\label{re12}
\end{eqnarray*}

Using the identity $\sum_{\beta\in I(n,d)}{d!\over \beta!}A_{\beta}=r^dm^{\underline{d}}-r^{\underline{d}}m^d$ (see (\ref{claimab})), one can obtain
\begin{eqnarray*}
\sum_{\beta\in I(n,d)}f_\beta A_\beta \le\left(\max_{\beta\in I(n,d)}f_{\beta}{\beta!\over d!}\right)(r^dm^{\underline{d}}-r^{\underline{d}}m^d).
\end{eqnarray*}

Combining with (\ref{re11}), this implies
\begin{eqnarray*}
r^dm^{\underline{d}}(\mathbb{E}[f(X)]-f(x^*))
\le(r^dm^{\underline{d}}-r^{\underline{d}}m^d)\left(\max_{\beta\in I(n,d)}f_{\beta}{\beta!\over d!}-\min_{\beta\in I(n,d)}f_{\beta}{\beta!\over d!}\right).
\end{eqnarray*}

Using Theorem \ref{thmgene}, Lemma \ref{lemmhdpoly} and the fact that $f(x^*)=f_{\Delta(n,m)}$, we finally obtain
\begin{eqnarray*}
r^dm^{\underline{d}} (f_{\Delta(n,r)}-f_{\Delta(n,m)}) \le
r^dm^{\underline{d}}(\mathbb{E}[f(X)]-f(x^*))
\le(r^dm^{\underline{d}}-r^{\underline{d}}m^d){2d-1\choose d}d^d (\overline{f}-\underline{f}),
\end{eqnarray*}
which concludes the proof of Theorem \ref{thmgeneral}.
\qed
\end{proof}

\vspace{0.5cm}
In what follows we  now assume that $f\in \MH_{n,d}$ has a rational global minimizer $x^*$ with denominator $m$, i.e., $x^*\in \Delta(n,m)$, so that
$\underline f=f_{\Delta(n,m)}$.

First, observe that  Theorem \ref{thmgeneral} refines the   result from Theorem \ref{thmklsgeneral} (which follows from the fact that
$1-{r^{\underline d}(km)^d\over r^d (km)^{\underline d}}\le 1-{r^{\underline d}\over r^d}$ for any $k\ge1$).

Next, we show as an application of   Theorem \ref{thmgeneral}  that the ratio $\rho_r(f)$ is in the order $O(1/r^2)$.

\begin{corollary}\label{corgeneral}
Let $f\in \MH_{n,d}$ and assume that there exists  a rational global minimizer for problem (\ref{funderline}). Then,
$\rho_r(f)=O(1/r^2)$.
\end{corollary}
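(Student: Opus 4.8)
The plan is to mimic the reduction used in the quadratic case (Theorem~\ref{thmquad} and Corollary~\ref{corquad}): I will reduce the error $\rho_r(f)$ to the range bound of Theorem~\ref{thmgeneral} evaluated at a suitable multiple of the denominator $m$, and then show that the coefficient appearing there is $O(1/r^2)$.

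Let $x^*$ be a rational global minimizer of $f$ over $\Delta_n$ with denominator $m$, so that $x^*\in\Delta(n,m)$ and $\underline f = f_{\Delta(n,m)}$. Fixing $r$, I would choose the integer $k\ge 1$ with $(k-1)m < r \le km$ and set $M:=km$, exactly as in Lemma~\ref{lemkmr}. Then $r\le M < r+m$, and since $m$ divides $M$ we have $\Delta(n,m)\subseteq\Delta(n,M)$, whence $f_{\Delta(n,M)}=f_{\Delta(n,m)}=\underline f$. For $r\ge d$ one has $M\ge r\ge d$, so Theorem~\ref{thmgeneral} applies with $M$ in place of $m$ and gives
\begin{equation*}
\rho_r(f)=\frac{f_{\Delta(n,r)}-\underline f}{\overline f-\underline f}\le\left(1-\frac{r^{\underline d}M^d}{r^dM^{\underline d}}\right)\binom{2d-1}{d}d^d.
\end{equation*}
Since $\binom{2d-1}{d}d^d$ depends only on $d$, it then remains to prove that $1-\frac{r^{\underline d}M^d}{r^dM^{\underline d}}=O(1/r^2)$.

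To this end I would write $\frac{r^{\underline d}}{r^d}=\prod_{j=1}^{d-1}(1-j/r)=:P$ and $\frac{M^{\underline d}}{M^d}=\prod_{j=1}^{d-1}(1-j/M)=:Q$, so that the quantity to bound is exactly $(Q-P)/Q$. Applying the telescoping identity $\prod_j a_j-\prod_j b_j=\sum_k(\prod_{j<k}a_j)(a_k-b_k)(\prod_{j>k}b_j)$ with $a_j=1-j/M$ and $b_j=1-j/r$, each difference $a_k-b_k=k(M-r)/(rM)$ is bounded by $(d-1)m/r^2$ (using $M-r<m$ and $M\ge r$), while all remaining factors lie in $[0,1]$ once $r\ge d$. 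This yields $Q-P\le \frac{d(d-1)}{2}\cdot\frac{m}{r^2}$. Bounding the denominator from below by $Q\ge P\ge(1/2)^{d-1}$ for $r\ge 2(d-1)$ then gives $(Q-P)/Q=O(1/r^2)$, with the implied constant depending only on $d$ and the denominator $m$. Small values of $r$ are absorbed into the constant, since $O(1/r^2)$ is an asymptotic statement.

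The calculations here are routine; the only point requiring genuine care is the choice of the auxiliary modulus $M=km$. It must simultaneously guarantee $f_{\Delta(n,M)}=\underline f$ (so that the range bound of Theorem~\ref{thmgeneral} becomes a bound on $\rho_r(f)$) and keep $M-r$ bounded by the \emph{constant} $m$, so that the gap $1/r-1/M=(M-r)/(rM)$, which drives the leading term of $1-r^{\underline d}M^d/(r^dM^{\underline d})$, is itself $O(1/r^2)$ rather than merely $O(1/r)$. This is precisely the mechanism that upgrades the $O(1/r)$ bound of Theorem~\ref{thmklsgeneral} to an $O(1/r^2)$ bound once a rational minimizer is available.
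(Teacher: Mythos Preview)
Your proof is correct and follows exactly the same high-level strategy as the paper: pick $M=km$ with $(k-1)m<r\le km$ so that $f_{\Delta(n,M)}=\underline f$, apply Theorem~\ref{thmgeneral} with $M$ in place of $m$, and then show that the coefficient $1-r^{\underline d}M^d/(r^dM^{\underline d})$ is $O(m/r^2)$. The only difference lies in this last technical estimate. The paper isolates it as Lemma~\ref{lemsigma}, expanding $(x-1)\cdots(x-d+1)=x^{d-1}+p(x)$ explicitly, writing the quantity as $\sigma_0\cdot\sigma_1$, and bounding each piece (invoking Lemma~\ref{lemkmr} along the way). Your telescoping-product argument on $Q-P=\prod_j(1-j/M)-\prod_j(1-j/r)$, combined with the trivial bound $(M-r)/(rM)<m/r^2$ and $Q\ge P\ge 2^{-(d-1)}$ for $r\ge 2(d-1)$, achieves the same conclusion more directly and with slightly less bookkeeping; it also makes the dependence of the constant on $d$ and $m$ fully explicit. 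Both arguments are elementary, and the structural content of the proof is identical.
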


\noindent For the proof of Corollary \ref{corgeneral}, we need the following notation.
Consider the univariate polynomial $(x-1)(x-2)\cdots(x-d+1)$ (in the variable $x$), which can be written as
\begin{equation}\label{def1}
(x-1)(x-2)\cdots(x-d+1)=x^{d-1}-a_{d-2}x^{d-2}+a_{d-3}x^{d-3}+\cdots+(-1)^{d-1}a_0=x^{d-1}+p(x),
\end{equation}
setting
\begin{equation}\label{def2}
p(x)=\sum_{i=0}^{d-2}(-1)^{d-1-i}a_ix^i,
\end{equation}
where $a_i$ are positive integers depending only on $d$ for any $i\in\{0,1,\dots,d-2\}$.
We also need the following lemma. 

\begin{lemma}\label{lemsigma}
Let $r,m$ and $k$ be integers satisfying $m\ge d$ ,$k\ge 1$ and $(k-1)m< r\le km$. Then one has
$$1-{r^{\underline{d}}(km)^d\over r^d(km)^{\underline{d}}}\le {m\over r^2}c_d,$$
for some constant $c_d$ depending only on $d$.
\end{lemma}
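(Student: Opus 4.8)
The plan is to rewrite the left-hand side as $1$ minus a product of simple factors and then apply an elementary product inequality. Writing $N=km$, note that
\[
\frac{r^{\underline d}N^d}{r^dN^{\underline d}}=\prod_{i=1}^{d-1}\frac{1-i/r}{1-i/N}=\prod_{i=1}^{d-1}t_i,\qquad t_i:=\frac{N(r-i)}{r(N-i)},
\]
so that $1-t_i=\frac{i(N-r)}{r(N-i)}$. Since $r\le N$, each factor satisfies $t_i\le 1$; and when $r\ge d$ one also has $1-i/r\ge 1-(d-1)/r>0$, so every $t_i$ lies in $[0,1]$.

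First I would treat the main case $r\ge d$. Here the Weierstrass (Bernoulli-type) product inequality $1-\prod_i t_i\le\sum_i(1-t_i)$, valid for $t_i\in[0,1]$, gives
\[
1-\frac{r^{\underline d}N^d}{r^dN^{\underline d}}\le\sum_{i=1}^{d-1}\frac{i(N-r)}{r(N-i)}.
\]
Now I invoke the two structural facts coming from the hypotheses. From $(k-1)m<r$ we get $N-r<m$. From $N=km\ge m\ge d$ and $i\le d-1$ we get $N-i\ge N-(d-1)>0$ and hence $\frac{r}{N-i}\le\frac{N}{N-(d-1)}\le d$ (the last inequality is equivalent to $N\ge d$). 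Writing each summand as $\frac{i(N-r)}{r^2}\cdot\frac{r}{N-i}$ and inserting $N-r<m$ and $\frac{r}{N-i}\le d$ yields $\frac{i(N-r)}{r(N-i)}\le \frac{imd}{r^2}$, so summing over $i=1,\dots,d-1$ gives the bound $\frac{m}{r^2}\,d\sum_{i=1}^{d-1}i=\frac{m}{r^2}\cdot\frac{d^2(d-1)}{2}$. Thus $c_d=\frac{d^2(d-1)}{2}$ works in this regime.

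Finally, the degenerate case $r\le d-1$ must be handled separately, since there $t_{d-1}\le 0$ and the product inequality no longer applies. But then $r^{\underline d}=r(r-1)\cdots(r-d+1)=0$ (one factor vanishes), so the left-hand side equals $1$, and it suffices to check $\frac{m}{r^2}c_d\ge 1$; this follows from $m\ge d$, $r\le d-1$ and $c_d\ge\frac{(d-1)^2}{d}$, which $c_d=\frac{d^2(d-1)}{2}$ satisfies. The main obstacle, such as it is, is precisely this bookkeeping: ensuring a single constant $c_d$ covers both the generic regime and the small-$r$ boundary, and verifying the clean factor bound $\frac{r}{N-i}\le d$, which is exactly what converts one factor of $m/r$ into $m/r^2$. (Alternatively one could expand $r^{\underline d}=r^d+rp(r)$ and $N^{\underline d}=N^d+Np(N)$ via (\ref{def1})--(\ref{def2}) to obtain the exact numerator $r^dNp(N)-rN^dp(r)$, but the product form above avoids tracking the alternating signs of $p$.)
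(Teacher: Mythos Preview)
Your proof is correct, and it takes a genuinely different route from the paper's. The paper factors the expression as
\[
1-\frac{r^{\underline d}(km)^d}{r^d(km)^{\underline d}}=\sigma_0(r,km)\cdot\sigma_1(r,km),
\qquad \sigma_0=\frac{(km)^{d-1}}{(km-1)\cdots(km-d+1)},\quad \sigma_1=\frac{p(km)}{(km)^{d-1}}-\frac{p(r)}{r^{d-1}},
\]
using the polynomial $p$ from (\ref{def1})--(\ref{def2}); it then bounds $\sigma_0\le d^{d-1}$ and, after replacing the alternating signs in $p$ by absolute values, bounds $\sigma_1$ term by term via $\frac{1}{r^s}-\frac{1}{(km)^s}\le \frac{m(d-1)}{r^2}$ (which in turn relies on Lemma~\ref{lemkmr}). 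The resulting constant is $c_d=d^{d-1}(d-1)\sum_{i=0}^{d-2}a_i=d^{d-1}(d-1)(d!-1)$.

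Your argument bypasses $p$ entirely: writing the ratio as a telescoping product $\prod_{i=1}^{d-1}t_i$ and applying the Weierstrass inequality $1-\prod t_i\le\sum(1-t_i)$ reduces everything to bounding the very simple quantities $1-t_i=\frac{i(N-r)}{r(N-i)}$. The two key estimates $N-r<m$ and $\frac{r}{N-i}\le\frac{N}{N-(d-1)}\le d$ then give the $m/r^2$ factor directly, without ever invoking Lemma~\ref{lemkmr}. This is both shorter and yields the much smaller explicit constant $c_d=\tfrac{1}{2}d^2(d-1)$, and your separate treatment of $r\le d-1$ (where $r^{\underline d}=0$) cleanly covers the range that the paper's proof also handles but only because its $\sigma_0\sigma_1$ decomposition does not require $r\ge d$. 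In short: same conclusion, more elementary mechanism, sharper constant.
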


\begin{proof}
Based on (\ref{def2}), one can write
\begin{equation*}
1-{r^{\underline{d}}(km)^d\over r^d(km)^{\underline{d}}}=\underbrace{{(km)^{d-1}\over (km-1)(km-2)\cdots (km-d+1)}}_{:=\sigma_0(r,km)}\underbrace{\left[ {p(km)\over (km)^{d-1}}-{p(r)\over r^{d-1}} \right]}_{:=\sigma_1(r,km)}.
\end{equation*}

\noindent
First we consider the term $\sigma_{0}(r,km)$. For any  integer $i\in \{1,\ldots,d-1\}$, as $k\ge 1$ and $m\ge d$, we have that $km(d-1)\ge id$, which implies ${km\over km-i}\le d$. Hence, one has $\sigma_0(r,km)\le d^{d-1}$.

\medskip
\noindent Next we consider the term $\sigma_{1}(r,km)$. Recalling (\ref{def1}), we can write $\sigma_1(r,km)$ as $\sigma_1(r,km)=\sum_{i=0}^{d-2}(-1)^{d-1-i}a_i\left({1\over (km)^{d-1-i}}-{1\over r^{d-1-i}}\right)$. Since $r\le km$, then ${1\over km}\le {1\over r}$ and ${1\over (km)^{d-1-i}}\le {1\over r^{d-1-i}}$ for any $i\in\{0,1,\dots,d-2\}$. This gives:

\begin{equation}\label{eqsigma}
\sigma_{1}(r,km)\le \sum_{i=0}^{d-2}a_i\left({1\over r^{d-1-i}}-{1\over (km)^{d-1-i}}\right).
\end{equation}

\noindent Then, we consider the term ${1\over r^{d-1-i}}-{1\over (km)^{d-1-i}}$ (for any $i\in\{0,1,\dots,d-2\}$) in (\ref{eqsigma}). For any integer $s\in[d-1]$, we have
$${1\over r^s}-{1\over (km)^s}={(km)^s-r^s\over r^s(km)^s}=D_1\cdot D_2,$$
setting
\begin{eqnarray*}
D_1&=& {km-r\over kmr},\\
D_2&=&{(km)^{s-1}+(km)^{s-2}r+\cdots+r^{s-1}\over r^{s-1}(km)^{s-1}} .
\end{eqnarray*}

\noindent On the one hand, one has $D_1\le {km-r\over r(km-1)}\le {m\over r^2}$, where the second inequality follows by Lemma \ref{lemkmr}.
On the other hand, observe that for any $i,j\in\{0,1,\dots,s-1\}$ with $i+j=s-1$, one has $(km)^ir^j\le(km)^{s-1}r^{s-1}$. Hence,
$D_2\le s\le d-1.$
That is, $${1\over r^s}-{1\over (km)^s}\le {m(d-1)\over r^2}.$$

\noindent Using this in  (\ref{eqsigma}), we find that  $\sigma_{1}(r,km)\le {m(d-1)\over r^2}\sum_{i=0}^{d-2}a_i$. From (\ref{def1}) and (\ref{def2}), we know that the term $(d-1)(\sum_{i=0}^{d-2}a_i)$ is a constant $c_d$ that depends only on $d$. This concludes the proof.
\qed
\end{proof}
\vspace{0.5cm}

\noindent We can now prove Corollary \ref{corgeneral}.

\begin{proof}{\em (proof of Corollary \ref{corgeneral})}
Let $x^*\in \Delta(n,m)$ be a rational global minimizer of $f$ over $\Delta_n$.
Let $r\ge d$ and let $k\ge 1$ be an integer such that $(k-1)m<r\le km$.
\ignore{
\noindent (i) If $d\le r\le m$,
then Theorem \ref{thmgeneral} reads
$$f_{\Delta(n,r)}-f_{\Delta(n,m)}\le \left(1-{r^{\underline{d}}m^d\over r^dm^{\underline{d}}}\right){2d-1\choose d}d^d (\overline{f}-\underline{f}).$$

\noindent One can easily conclude $\alpha_r(f)=O(1/r^2)$ using Lemma \ref{lemsigma} (setting $k=1$).

\noindent (ii) If $(k-1)m < r\le km$ with positive integer $k\ge 2$, then
}
Using  Theorem \ref{thmgeneral} (applied to $r$ and $km$ (instead of $m$)), we obtain  that
$$f_{\Delta(n,r)}-\underline{f}=f_{\Delta(n,r)}-f_{\Delta(n,km)}\le \left(1-{r^{\underline{d}}(km)^d\over r^d(km)^{\underline{d}}}\right){2d-1\choose d}d^d (\overline{f}-\underline{f}).$$
Combining with  Lemma \ref{lemsigma}, one can conclude.
\qed
\end{proof}

\ignore{

\begin{remark}\label{trial}
We use equality (\ref{re3}), the fact $\sum_{i=1}^nm_i=m$ and the relations:
\begin{eqnarray*}
\min_{\beta\in I(n,d)}f_{\beta}{\beta!\over d!}&\le&\underline{f}\le\overline{f}\le\max_{\beta\in I(n,d)}f_{\beta}{\beta!\over d!}\\
\min_{\beta\in I(n,d)}f_{\beta}{\beta!\over d!}&\le& f_{\beta}{\beta!\over d!} \le\max_{\beta\in I(n,d)}f_{\beta}{\beta!\over d!}.
\end{eqnarray*}

\noindent i) when $d=2$, one has
\begin{eqnarray*}
&&r^2m(m-1)\left(\mathbb{E}[f(X)]-f(x^*)\right)\\
&=& -mr(m-r)f(x^*)+\sum_{i=1}^nf_im_ir(m-r)\\
&\le& -mr(m-r)\min_{\beta\in I(n,d)}f_{\beta}{\beta!\over d!}+\max_{\beta\in I(n,d)}f_{\beta}{\beta!\over d!}\sum_{i=1}^nm_ir(m-r)\\
&=& mr(m-r)(\max_{\beta\in I(n,d)}f_{\beta}{\beta!\over d!}-\min_{\beta\in I(n,d)}f_{\beta}{\beta!\over d!}).
\end{eqnarray*}

\noindent ii) when $d=3$, one has
\begin{eqnarray*}
&&r^3m(m-1)(m-2)\left(\mathbb{E}[f(X)]-f(x^*)\right)\\
&=&  (m-r)(2m^2r+2mr^2-3m^2r^2)f(x^*)+\sum_{i=1}^nf_i(m-r)(3{m_i}^2r^2-3{m_i}^2r+m_imr-2m_ir^2)\\
&+&\sum_{i<j}(f_{ij}+g_{ij})(m-r)(m_im_jr^2-m_im_jr)\\
&\le& (m-r)(3m^2r^2-2m^2r-2mr^2)(\max_{\beta\in I(n,d)}f_{\beta}{\beta!\over d!}-\min_{\beta\in I(n,d)}f_{\beta}{\beta!\over d!}).
\end{eqnarray*}
\end{remark}
\begin{conjecture}
One has
\begin{eqnarray*}
\mathbb{E}[f(X)]-f(x^*)\le \left(1-{r^{\underline{d}}m^d\over r^dm^{\underline{d}}}\right)(\max_{\beta\in I(n,d)}f_{\beta}{\beta!\over d!}-\min_{\beta\in I(n,d)}f_{\beta}{\beta!\over d!}).
\end{eqnarray*}
\end{conjecture}

\begin{proof}
It suffices to show
\begin{eqnarray}
&&r^dm^{\underline{d}}-r^{\underline{d}}m^d\label{re4}\\
&=&\sum_{\beta\in I(n,d)}{d!\over \beta!}\left[ r^{\underline{d}}\left(\prod_{i=1}^n {m_i}^{\underline{\beta_i}}- \prod_{i=1}^n {m_i}^{\beta_i} \right)+\sum_{\alpha\in\oN^n:\alpha\le \beta,\alpha\neq\beta}{r^{\underline{|\alpha|}}m^{\underline{d}}\over m^{\underline{|\alpha|}}}\prod_{i=1}^n{m_i}^{\underline{\alpha_i}}S(\beta_i,\alpha_i) \right].\nonumber
\end{eqnarray}
Apply induction on the value of $d$.
When $d=1$, one can check equality (\ref{re4}) reads $0=0$. When $d=2,3$, from Remark \ref{trial}, equality (\ref{re4}) holds.

Suppose (\ref{re4}) holds when the value is $d$. Then, consider the case when the value is $d+1$.

\qed
\end{proof}
}

\section{Concluding remarks}
As explained in the introduction, the analysis presented here is essentially a modification of {the} analysis in \cite{KLS13},
in the sense that one discrete distribution on $\Delta(n,r)$ is replaced by another.

Having said that, the analysis in the current paper does not imply the PTAS results in \cite{KLS13} for non-quadratic $f$, due to
the restrictive assumption of a rational global minimizer.
It is not clear at this time if this assumption is an artefact of our analysis using the hypergeometric distribution, or if there
exist examples of problem {(\ref{funderline})} where all global minimizers are irrational and $\rho_r(f) = \Omega(1/r)$.
This remains as an interesting question for future research.

\begin{appendix}

\section{}\label{appthm}

We  give here the proof of  Theorem \ref{thmcubsqf}. As in the proof of Theorem \ref{thmquadratic}, let $x^*\in\Delta(n,m)$ be a minimizer of $f$ over $\Delta(n,m)$, i.e.,  $f(x^*)=f_{\Delta(n,m)}$, and set  $m_i= mx^*_i$ for $i\in[n].$
Consider  the random variables $X_i$  defined in (\ref{Yhyper}) and (\ref{capitalX}), so that  $X=(X_1,X_2,\dots,X_n)$ takes its values in $\Delta(n,r)$.

\medskip
First we consider the case (i) when $f$ is a homogeneous polynomial of degree 3. Write $f$  as
\begin{equation*}\label{cubicf}
f=\sum_{i=1}^nf_ix_i^3+\sum_{1\le i<j\le n}(f_{ij}x_ix_j^2+g_{ij}x_i^2x_j)+\sum_{1\le i<j<k\le n}f_{ijk}x_ix_jx_k.
\end{equation*}
By Corollary \ref{momentcor}, for any $i,j,k \in[n]$, one has
\begin{eqnarray*}
&&\mathbb{E}[X^3_i] =\left({m_i\over m}\right)^3\bigg[ 1-(m-r){3mr-2(m+r)\over r^2(m-1)(m-2)}
+(m-r){3rm_im^2-3m_im^2+m^3-2rm^2\over r^2m_i^2(m-1)(m-2)} \bigg]\\
&&\mathbb{E}[X_i^2X_j] = \left({m_i\over m}\right)^2{m_j\over m} \bigg[ 1-(m-r){3mr-2(m+r)\over r^2(m-1)(m-2)}
+(m-r){(r-1)m^2 \over r^2m_i(m-1)(m-2)} \bigg]\\
&&\mathbb{E}[X_iX_jX_k] = {m_i\over m}{m_j\over m}{m_k\over m}\left[ 1-(m-r){3mr-2(m+r)\over r^2(m-1)(m-2)} \right].
\end{eqnarray*}
Therefore, one obtains
\begin{equation}\label{re0}
\begin{array}{lll}
\mathbb{E}\left[f(X)\right] &=& \sum_if_i\mathbb{E}[X^3_i]+\sum_{i<j}\left(f_{ij}\mathbb{E}[X_iX_j^2] +g_{ij}\mathbb{E}[X_i^2X_j]\right)+\sum_{i<j<k}f_{ijk}\mathbb{E}[X_iX_jX_k]\\
&=& f(x^*)\left[ 1-(m-r){3mr-2(m+r)\over r^2(m-1)(m-2)} \right]+{m-r\over r^2(m-1)(m-2)}\sigma,\\
\end{array}
\end{equation}
where we set
\begin{equation}\label{re2}
\sigma :=
 \sum_{i=1}^nf_i{m_i\over m}(3m_ir-3m_i+m-2r) +m(r-1)\sum_{i<j}(f_{ij}+g_{ij}){m_i\over m}{m_j\over m}.
\end{equation}
As in \cite{KLP06}, by evaluating
 $f$ at $e_i$ and $(e_i+e_j)/2$,  we obtain respectively the relations:
\begin{eqnarray}
\underline{f}\le f_i\le\overline{f},\label{refi}\\
f_i+f_j+f_{ij}+g_{ij}\le 8\overline{f}.\label{refij}
\end{eqnarray}
Using (\ref{refij}), we obtain
$$\sum_{i<j}(f_{ij}+g_{ij}){m_i\over m}{m_j\over m}\le \sum_{i<j}(8\overline{f}-f_i-f_j){m_i\over m}{m_j\over m}
=8\overline{f}\sum_{i<j}{m_i\over m}{m_j\over m}-\sum_{i=1}^nf_i{m_i\over m}\left( 1-{m_i\over m} \right).$$
We use this inequality together with (\ref{refi})  to upper bound the term  $\sigma$ from
 (\ref{re2}):
\begin{eqnarray*}
\sigma
&\le&\sum_{i=1}^nf_i{m_i\over m}(4m_ir-4m_i+2m-2r-mr)
+8m(r-1)\overline{f}\sum_{i<j}{m_i\over m}{m_j\over m}  \nonumber\\
&=& 4m(r-1)\left( \sum_{i=1}^nf_i\left({m_i\over m}\right)^2+2\overline{f}\sum_{i<j}{m_i\over m}{m_j\over m} \right)+(2m-2r-mr)\sum_{i=1}^nf_i{m_i\over m}\\
&\le &4m(r-1)\overline{f}\left( \sum_{i=1}^n\left({m_i\over m}\right)^2+2\sum_{i<j}{m_i\over m}{m_j\over m}\right)+2(m-r)\sum_{i=1}^nf_i{m_i\over m}- mr\sum_{i=1}^nf_i{m_i\over m}\\
&\le& 4m(r-1)\overline{f}+2(m-r)\overline{f}-mr\underline{f} = (4mr-2m-2r) \overline f - mr \underline f.
\end{eqnarray*}
We can now upper bound the quantity $\mathbb{E}\left[f(X)\right]$ from  (\ref{re0}) as follows:
\begin{eqnarray*}
\mathbb{E}\left[f(X)\right] \le f(x^*) +{(m-r)(4mr-2m-2r)\over r^2(m-1)(m-2)}(\overline{f}-\underline{f}).
\end{eqnarray*}
Together with Lemma \ref{lemmhdpoly}, this now  concludes the proof of Theorem \ref{thmcubsqf} (i).
\vspace{0.5cm}

We now consider the case (ii) when $f$ is a homogeneous square-free polynomial of degree $d$. Say, $f=\sum_{I\subseteq [n], |I|=d}f_I x^I$.
By Corollary \ref{momentcor},
 one has
\begin{eqnarray*}
\mathbb{E}[X^I] &=& {r^{\underline{d}}\over r^d}{\prod_{i\in I}m_i\over m^{\underline{d}}}={r^{\underline{d}}\over r^d}{m^d\over m^{\underline{d}}}\prod_{i\in I}{m_i\over m}
\end{eqnarray*}
and thus
\begin{eqnarray*}
\mathbb{E}[f(X)]=\sum_{I\subseteq [n],|I|=d}f_I\mathbb{E}[X^I] ={r^{\underline{d}}\over r^d}{m^d\over m^{\underline{d}}}f(x^*)={r^{\underline{d}}\over r^d}{m^d\over m^{\underline{d}}}f_{\Delta(n,m)}.
\end{eqnarray*}
Therefore,
$$\mathbb{E}[f(X)]-f_{\Delta(n,m)}=-\left( 1-{r^{\underline{d}}\over r^d}{m^d\over m^{\underline{d}}}\right)f_{\Delta(n,m)}\le -\left( 1-{r^{\underline{d}}\over r^d}{m^d\over m^{\underline{d}}}\right)\underline f \le \left( 1-{r^{\underline{d}}\over r^d}{m^d\over m^{\underline{d}}} \right)\left(\overline f - \underline f\right).$$
Here, for the last inequality we have used the fact that $\overline f\ge 0$
(since  $f(e_i)=0$ for any $i\in[n]$). Together with Lemma \ref{lemmhdpoly}, this concludes the proof of Theorem \ref{thmcubsqf} (ii).


\section{}\label{appimply}

Assume $f\in\mathcal{H}_{n,3}$ has a rational minimizer on $\Delta_n$ with  denominator $m\ge3$.

First we show how to derive Theorem \ref{thmklscubsqf} (i) for $r\ge 1+{m-1\over \sqrt{2m}-1}$ from our result in Theorem \ref{thmcubsqf} (i).

When $1+{m-1\over \sqrt{2m}-1}\le r\le m$, this follows directly from the fact that $\frac{(m-r)(4mr-2m-2r)}{r^2(m-1)(m-2)} \le {4\over r}-{4\over r^2}$.

\ignore{
that Theorem \ref{thmcubsqf} (i) implies Theorem \ref{thmklscubsqf} (i) when $r\ge 1+{m-1\over \sqrt{2m}-1}$.
Indeed, this follows directly from the fact that
$\frac{(m-r)(4mr-2m-2r)}{r^2(m-1)(m-2)} \le {4\over r}-{4\over r^2}$ when $ 1+{m-1\over \sqrt{2m}-1}\le r\le m.$}

Assume now $r>m\ge3$ and  $(k-1)m < r\le km$ for some integer $k\ge 2$. It suffices to show the inequality ${(km-r)(4kmr-2km-2r)\over r^2(km-1)(km-2)}\le {4\over r}-{4\over r^2}$ or, equivalently,
$$\varphi(r):= (2km-1)r^2+(4-6km)r-k^2m^2+6km-4\ge 0.$$

One can check that the function $\varphi(r)$ is monotonically increasing for $r\ge 1+{km-1\over 2km-1}$ and thus for $r\ge2$.
Hence it suffices to show that $\varphi((k-1)m+1)\ge 0$. If $m\ge3$ is fixed, then one can check that  $\varphi((k-1)m+1)$, as a function of $k$,  is monotonically increasing for $k\ge 2$.
Therefore, it suffices to show that $\varphi((k-1)m+1)\ge 0$ when $k=2$ and $m\ge3$.
One can now check that $\varphi((k-1)m+1)$ with $k=2$, as a function of $m$,  is monotonically increasing for $m\ge3$.
Finally, we can conclude that it suffices to show that  $\varphi((k-1)m+1)\ge 0$ when $k=2$ and $m=3$, which can be easily checked to hold.
Thus we have shown that $\varphi(r)\ge 0$ for any $r>m$.

\vspace{0.5cm}
To see that  Theorem \ref{thmcubsqf} (ii) implies Theorem \ref{thmklscubsqf} (ii), consider an integer
 $k\ge 1$ such that $(k-1)m<r \le km$ and observe
 that $1-{r^{\underline d}(km)^d\over r^d (km)^{\underline d}}\le 1-{r^{\underline d}\over r^d}$.

\vspace{0.5cm}
\section{}\label{appcor}

We prove Corollary \ref{corcub}.
Assume $m\ge 3$ is the  denominator for a rational global minimizer of $f$ over $\Delta_n$.
If $1\le r\le m$ then, by using Theorem \ref{thmcubsqf} (i), Lemma 2.1 and the inequality ${4mr-2m-2r\over r(m-2)}\le {4(m-1)\over m-2}$, we deduce that $$\rho_r(f) \le {(m-r)(4mr-2m-2r)\over r^2(m-1)(m-2)} \le {m^2\over r^2(m-2)}.$$

Assume now $r>m$ and  $(k-1)m < r\le km$ for some  integer $k\ge 2$. Then Theorem \ref{thmcubsqf} (i) implies
$$f_{\Delta(n,r)} - \underline f=f_{\Delta(n,r)} - f_{\Delta(n,km)}\le {(km-r)(4kmr-2km-2r)\over r^2(km-1)(km-2)}\left(\overline f - \underline f\right).$$
One can easily check that ${4kmr-2km-2r\over r(km-2)}\le 6$ which, together with Lemma \ref{lemkmr}, implies that  $\rho_r(f)\le{6m\over r^2}$.
This concludes the proof of Corollary \ref{corcub}.

\end{appendix}

\end{document}